\newcommand{\R}{\mathbb{R}}
\newcommand{\N}{\mathbb{N}}
\newtheorem{teo}{Theorem}[section]
\newtheorem{rem}[teo]{Remark}
\newtheorem{lema}[teo]{Lemma}
\newtheorem{defi}[teo]{Definition}
\newtheorem{prop}[teo]{Proposition}
\newtheorem{cor}[teo]{Corollary}
\newcommand{\re}{\mathbb{R}}
\date{}
\title[The period function with applications]{The local period function for Hamiltonian systems with applications.}
\author[C. A. Buzzi]{Claudio A. Buzzi}
\address{Mathematics Department, Universidade Estadual Paulista Julio
de Mesquita Filho, 15054-000 S\~ao Jos\'{e} do Rio Preto, S\~ao Paulo,
Brazil } \email{claudio.buzzi@unesp.br}
\author[Y. R. Carvalho]{Yagor Romano Carvalho}
\address{Mathematics Department, Universidade Estadual Paulista Julio
de Mesquita Filho, 15054-000 S\~ao Jos\'{e} do Rio Preto, S\~ao Paulo,
Brazil} \email{yagor.carvalho@unesp.br}
\author[A. Gasull]{Armengol Gasull}
\address{Mathematics Department, Universitat Aut\`{o}noma de Barcelona,
08193 Bellaterra, Barcelona, Catalonia, Spain}
\email{gasull@mat.uab.cat}
\keywords{Period function; Limit cycles; Abelian integrals; Extended
complete Chebyshev systems; Picard-Fuchs differential equations}
\subjclass[2010]{Primary: 34C08. Secondary: 34C25,  37G15,
37J45}
\begin{document}

    \begin{abstract}
In the first part of the paper we develop a constructive procedure
to obtain the Taylor expansion, in terms of the energy, of the
period function for a non-degenerated center of any planar analytic
Hamiltonian system. We apply it to several examples, including the
whirling pendulum and a cubic Hamiltonian system.  The knowledge of
this Taylor expansion of the period function for this system  is one
of the key points to study the number of zeroes of an Abelian
integral that controls the number of limit cycles bifurcating from
the periodic orbits of a planar Hamiltonian system that is inspired
by a physical model on capillarity. Several other classical tools,
like for instance Chebyshev systems are applied to study this number
of zeroes. The approach introduced can also be applied in other
situations.
    \end{abstract}

    \maketitle

    \section{Introduction and main results}

Let $\gamma_s,$ with $s\in I\in\R,$ be a parameterized  continua of
periodic orbits of a planar autonomous differential system. In
general, $I$ is either an open interval or an interval of the form
$[s_0,s_1).$ The function that assigns to each $s$ the minimal
period of $\gamma_s$ is called {\it period function} and it is
denoted by $T(s).$ Similarly, the function that assigns to each
$\gamma_s$ the area surrounded by this closed curve is denoted by
$A(s)$ and  called {\it area function.} The period function is
 important to study theoretical properties of
planar ordinary differential equations and their perturbations, see
for instance \cite[ pp. 369-370]{ChoHal1982}; to understand some
mathematical models in physics or ecology, see \cite{ConVil2008,
FreGasGui2004, Rot1985, Wal1986} and the references therein;   in
the description of the dynamics of some discrete dynamical systems,
see \cite{BeuCus1998, CimGasMan2008, CimGasMan2015}; or for counting
the solutions of some boundary value problems, see \cite{Chi1987,
Chi1988}. When the system is Hamiltonian, with Hamiltonian function
$H$ and $\gamma_h\subset\{H=h\},$ it is natural to consider $s=h$
and write $T=T(h).$

Given a planar analytic Hamiltonian system
\begin{equation}\label{eq:ham}
\dot{x}= -H_y(x,y),\qquad \dot{y}=H_x(x,y),
\end{equation}
 with a non-degenerated
center at the origin (that without loss of generality we will
associated to $h=0$ and then $I=[0,h_1)\subset\R$)  it is known that
$T(h),$ in a neighborhood of $h=0,$ is an analytic function of the
energy $h$ and it is given by the derivative with respect $h$ of the
area function $A(h),$ see \cite{ManVil2002}. There are several
authors that compute the Taylor series of $T$  at $h=0$ for
particular Hamiltonian systems but, to the best of our knowledge,
most examples deal with Hamiltonian functions with separated
variables $H(x,y)=F(x)+G(y),$ see for instance
\cite{Bel2011,Fos2004} and their references. Our first result
provides a systematic constructive approach for finding this Taylor
series up to any order for any Hamiltonian system.

\begin{teo}\label{th:main1}
Let $ H $ be an analytic function with $ H (0,0) = 0 $ and assume
that the Hamiltonian
         system \eqref{eq:ham} has a non-degenerate center at the
         origin. Then:

         \begin{enumerate}[(i)]

         \item  In a neighborhood of $ h = 0 $ the period function $T(h)$ and the area function $A(h)$ are analytic
           and satisfy $A'(h) = T(h).$

          \item Let $\top(\rho),$ with $\rho>0,$ be the period of the orbit
          starting at the point $(x,y)=(0,\rho).$ Then:
\begin{itemize}
          \item The function  $\top$ is
          analytic at $\rho=0$ and its Taylor development  at $\rho=0$  can be
          obtained algorithmically from the expression of \eqref{eq:ham} in polar
          coordinates.
          \item  The equation $H(0,\rho)=h$ has, in a positive neighborhood of $h = 0,$  a
          positive solution $\rho= S(\sqrt{h})$ with $S$ analytic at
          zero. Moreover  its Taylor development at zero  can also be obtained algorithmically from the one of
          $H(0,\rho).$

          \item It holds that $T(h)= \top(S(\sqrt{h}\,)).$ Then
          $A(h)=\int_0^h T(s)\,ds.$

 \end{itemize}

 \end{enumerate}

\end{teo}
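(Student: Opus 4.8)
The plan is to establish the three bullet points of part (ii) in sequence and then combine them with part (i). The central idea is to separate the two roles played by the initial point $(0,\rho)$: it determines both the geometric orbit (hence its period, via $\top$) and the energy level $h = H(0,\rho)$ on which that orbit sits. Since the center is non-degenerate, the energy $H(0,\rho)$ restricted to the positive $y$-axis has a strict local minimum at $\rho=0$ with $H(0,0)=0$, and its Taylor expansion begins with a positive quadratic term; this is the structural fact that drives everything.

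First I would write system \eqref{eq:ham} in polar coordinates $x=r\cos\theta$, $y=r\sin\theta$ and analyze the resulting equation $d\theta/dt = \Theta(r,\theta)$ together with $dr/dt=R(r,\theta)$. Because the origin is a non-degenerate center, the angular speed does not vanish near the origin, so time can be reparametrized by $\theta$ and the period is recovered as $\top(\rho)=\int_0^{2\pi} d\theta / \dot\theta$ evaluated along the orbit through $(0,\rho)$. Analyticity of the right-hand sides and non-vanishing of $\dot\theta$ yield, by the analytic dependence of solutions on initial conditions and parameters, that $\top$ is analytic in $\rho$ at $\rho=0$; its Taylor coefficients are then computed order by order by expanding the polar ODE and integrating in $\theta$, which is the promised algorithmic procedure.

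Next I would treat the energy relation $H(0,\rho)=h$. Writing the Taylor series $H(0,\rho)=a\rho^2(1+O(\rho))$ with $a>0$ forced by the non-degenerate center, I would set $u=\rho^2$ so that $H(0,\rho)=a\,u\,(1+\phi(u))$ with $\phi$ analytic and $\phi(0)=0$. Dividing by $a$ and applying the analytic inverse function theorem to the map $u\mapsto u(1+\phi(u))$ near $u=0$ gives $u$ as an analytic function of $h$ vanishing at $h=0$ with nonzero derivative; hence $\rho=\sqrt{u}=S(\sqrt{h}\,)$ for an analytic $S$, and its coefficients follow by formal series inversion of the known series of $H(0,\rho)$. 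The final bullet is then immediate: the orbit through $(0,\rho)$ lies on $\{H=h\}$ precisely when $\rho=S(\sqrt h\,)$, so $T(h)=\top(S(\sqrt h\,))$ as a composition of analytic functions, and $A(h)=\int_0^h T(s)\,ds$ follows from $A'(h)=T(h)$ in part (i) together with $A(0)=0$.

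I expect the main obstacle to be rigorously justifying the analyticity of $\top$ at the degenerate-looking point $\rho=0$: although $\dot\theta$ is bounded away from zero there, one must confirm that the polar representation extends analytically through the center and that the $\theta$-integration preserves analyticity in the initial radius uniformly. This is handled by appealing to the analytic dependence of flows on initial conditions away from the singularity combined with the non-degeneracy hypothesis, which guarantees the local phase portrait is an analytic foliation by ovals; the apparent singularity of polar coordinates at the origin is not encountered because every orbit under consideration has $\rho>0$. Once analyticity is secured, the remaining steps are formal series manipulations whose effectivity gives the claimed algorithm.
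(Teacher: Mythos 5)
Your treatment of the first bullet is essentially correct and coincides with the paper's argument: the paper also passes to polar coordinates, takes $\theta$ as independent variable, expands the solution $r(\theta,\rho)$ in powers of $\rho$, and integrates $dt/d\theta$ over $[0,2\pi]$; your appeal to analytic dependence on initial conditions is a legitimate compression of that computation. The genuine gap is in your inversion of the energy relation. You write $H(0,\rho)=a\rho^2\bigl(1+O(\rho)\bigr)$ and then claim that setting $u=\rho^2$ gives $H(0,\rho)=a\,u\,(1+\phi(u))$ with $\phi$ analytic in $u$. This is false in general: the factor $1+O(\rho)$ is an analytic function of $\rho$ which typically contains \emph{odd} powers of $\rho$, and an odd power of $\rho$ is $u^{k+1/2}$, not analytic in $u$. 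For instance, for the paper's quadratic family $H(x,y)=x^2/2+y^2/2-x^3/3+axy^2-by^3/3$ with $b\neq0$ one gets $H(0,\rho)=\rho^2\bigl(1/2-b\rho/3\bigr)$, and $1/2-b\sqrt{u}/3$ is not analytic at $u=0$. Your substitution only works when the restriction of $H$ to the chosen transversal axis is an even function, which is a very special situation (e.g.\ separated variables with even $G$).

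Moreover, the failure is structural, not just a fixable technicality in the expansion: your route would produce $\rho=\sqrt{u(h)}$ with $u$ analytic and $u'(0)\neq0$, hence a series in \emph{odd} powers of $\sqrt h$ only, whereas the true expansion generally has both parities — in the paper's cubic example, $S(\sqrt{h})=\sqrt{2}\,h^{1/2}-\tfrac{2}{3}h+\tfrac{5\sqrt2}{9}h^{3/2}-\tfrac{32}{27}h^2+\cdots$, which contains integer powers of $h$. The correct repair is the paper's branch lemma (Lemma \ref{lemasoleqanalitica}): set $z=\sqrt h$ and solve the two-variable analytic equation $F(\rho,z)=H(\rho,0)-z^2=\tfrac12(\rho+\sqrt2 z)(\rho-\sqrt2 z)+O(3)=0$ by the blow-up $\rho=uz$ followed by the implicit function theorem, which yields two analytic branches $\rho=S_j(z)=(-1)^j\sqrt2\,z+O(z^2)$ and selects the one positive for $z>0$. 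Equivalently, and closest in spirit to what you attempted, you could invert the map $\rho\mapsto\sqrt{H(0,\rho)}=\sqrt{a}\,\rho\,\bigl(1+O(\rho)\bigr)$, which \emph{is} analytic in $\rho$ near $0$ with nonzero derivative, by the analytic inverse function theorem; its inverse is the desired $S$, analytic in the variable $\sqrt h$. With either repair, your final step — $T(h)=\top(S(\sqrt h))$ as a composition of analytic maps and $A(h)=\int_0^h T(s)\,ds$ from item (i) — goes through unchanged.
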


We want to stress that our contribution restricts to item (ii). As
we have already said, item (i) is proved in \cite{ManVil2002}. We
put both together for the sake of clarity. We remark that if the
system is only of class $\mathcal{C}^k,$ for some $k\in\N,$ our
approach can be adapted to this setting providing several terms of
the Taylor expansion of $T(h)$ and $A(h).$

It is also  worthwhile  to comment that sometimes it is also
possible to obtain a closed integral   expression for $T(h)$ or to
prove that it satisfies some differential equation.  Hence other
totally different approach consists in studying this integral and
try to obtain its Taylor series at the origin or to study a
particular solution of this differential equation. As far as we
know, our two steps procedure is new, totally different and
applicable to all integrable planar systems with a non-degenerated
center. It is tedious, but so systematic that can be used with any
computer algebra system. In this paper, these computations are done
with Maple.

In Section \ref{se:s2} we will apply it to several examples,
including the whirling pendulum and a family of quadratic
Hamiltonian systems. Our main application is given in the second
part of this paper that studies, with this point of view, the number
of limit cycles that appear in the study of a planar  system
motivated by a physical model on capillarity that we briefly
describe.

Capillary action is the physical property that fluids have to go
down or up
    in extremely thin tubes.
     The capillary rise in a narrow vertical tube is
     a remarkable physical phenomenon that can also be observed in many other everyday situations,
      such as water transport in the soil or plants.  The increased use of capillary flow as an
       application in the industry has made a substantial growth in the search for more appropriate mathematical models.
       For the description of the model and more details see \cite{PloSwi2018}.  In that paper we
       can see this model is
\begin{equation}\label{sistemakreal}
    \left\{ \begin{array}{l}
    u'=v,\\
    v' = 1 + K v - \sqrt{2u},
    \end{array} \right.
    \end{equation}
    and it is defined on $u>0,$ where the prime denotes a differentiation with respect to the real time variable $t$ and
    $K\in\R$. Notice that \eqref{sistemakreal} has a unique equilibrium point at $\left( \frac{1}{2},0
    \right)$. Moreover when $K=0$, it is a Hamiltonian system with
 $H(u,v)=-u+\sqrt{8u^3}/3+v^2/2$.
     Its critical point is a local minimum, so it is of  center  type.

Consider the following perturbation of the Hamiltonian system
(\ref{sistemakreal}), with $K=0$, motivated by the appearance of
$\sqrt{2u}$ in its expression,
    \begin{equation}\label{sistemaintegralabeliana}
    \left\{ \begin{array}{l}
    u' = v + \varepsilon \widetilde{P}(\sqrt{2u}),   \\
    v' =  1 - \sqrt{2u} + \varepsilon \widetilde{Q}(\sqrt{2u}),
    \end{array}
    \right.
    \end{equation}
    defined on $u>0,$ where $\widetilde{P}(\sqrt{2u})$ and $\widetilde{Q}(\sqrt{2u})$ are
    polynomials in the variable $\sqrt{2u}$ and $\varepsilon \in \re$ is a small parameter.
     Now we perform the change of variable $x=\sqrt{2u}-1$ and $y=-v$, and a time rescaling. So, system \eqref{sistemaintegralabeliana} becomes
    \begin{equation}\label{sistemaqueaplicamosintabel}
    \left\{ \begin{array}{l}
    x' = -y + \varepsilon P(x), \\
    y'=  x + x^2 + \varepsilon  Q(x),
    \end{array} \right.
    \end{equation}
     defined on $x+1>0,$ with $P$ and $Q,$ again polynomials with respective degrees $n+1$ and $m.$ Clearly, when  $\varepsilon=0$ the above
     system is a Hamiltonian system, with a center at the origin, and
    \begin{equation}\label{hamsistemaintabel}
    H(x,y)= \frac{x^2}{2} + \frac{x^3}{3}+ \frac{y^2}2,
    \end{equation}
    the energy levels $\{H(x,y)=h\}\cap\{x+1>0\}$ of the Hamiltonian (\ref{hamsistemaintabel}) for $h \in (0, \frac{1}{6})$ are ovals.
By using the classical approach, see Subsection \ref{ss:whp} for
more details, the number of limit cycles that bifurcate from the
periodic orbits $\gamma_h,$ by a first order analysis in
$\varepsilon$ for system \eqref{sistemaqueaplicamosintabel} is given
by the maximum number of simple zeroes of the Abelian integral
associated to \eqref{sistemaqueaplicamosintabel},
\begin{equation}\label{eq:final}
I(h)=\int_{\gamma_h} P(x)\,dy=-\int_{\gamma_h} P'(x) y\,
dx=\sum_{j=0}^{n} \alpha_j I_{j}(h)\quad\mbox{with}\quad I_j(h)=
\displaystyle \int_{\gamma_h} x^j y\,  dx,
\end{equation}
where the parameters $\alpha_j$ are arbitrary real parameters that
depend on the ones of $P.$

 A first result relates the above integrals with the first part
of our paper. It will be proved in Subsection \ref{ss:raai}.

\begin{prop}\label{pr:AT}  Let $A(h)$ and $T(h)$ be the area and
period functions associated to the Hamiltonian system with
$H(x,y)=x^2/2+y^2/2+x^3/3.$
        \begin{enumerate}[(i)]
\item  The functions $A$ and $T$ verify the $2\times2$ Picard-Fuchs
equations
\begin{equation*}
6(6h -1)h \left(\begin{array}{c}
 A'(h)  \\
T'(h)
\end{array}\right)=\left(
\begin{array}{cc}
0 & 6(6h -1)h\\
-5 & 0\\
\end{array}
\right)\left(
         \begin{array}{c}
           A(h)\\
           T(h)\\
         \end{array}
       \right).
\end{equation*}

\item The function $A$ verifies the Hill's equation
\begin{equation}\label{eq:hill-AT}
A''(h)=\frac5{6(1-6h)h} A(h).
\end{equation}

\item The function $p(h)=A(h)/T(h)$ satisfies the Riccati
differential equation
\begin{equation*}
p'(h)=-p^2(h)+\frac5{6(1-6h)h}.
\end{equation*}

\item Consider the  Abelian integrals defined in
        \eqref{eq:final}. Then $I_0(h)=-A(h)$ and
 for all $0\le n\in\N,$
\begin{align*}
I_{3n+1}(h)=& a_{n+1}(h)A(h)+b_{n}(h) (6h-1)h T(h),\\
I_{3n+2}(h)=& c_{n+1}(h)A(h)+d_{n}(h) (6h-1)h T(h),\\
I_{3n+3}(h)=& e_{n+1}(h)A(h)+f_{n}(h) (6h-1)h T(h),
\end{align*}
where $g\in\{a,b,c,d,e,f\},$ and $g_k\in\mathbb{Q}(h)$ denotes a
polynomial of degree $k.$ Moreover, $I_{3n+2}$ is a linear
combination of several $I_j$ with $j<3n+2$ and $j\not\equiv2 \pmod
3.$
\end{enumerate}
\end{prop}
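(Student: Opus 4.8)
The plan is to treat items (ii) and (iii) as formal consequences of (i) and to concentrate on the two substantive points: the second Picard--Fuchs relation in (i) and the reduction in (iv). Once (i) is available, Hill's equation (ii) follows by differentiating $A'=T$ and substituting the second row $6(6h-1)h\,T'=-5A$, giving $A''=T'=\frac{5}{6(1-6h)h}A$; and (iii) follows by direct differentiation of the ratio $p$ of $A$ and $T$, using $A'=T$ together with the expression for $T'$ from (i), which reduces the linear relations to the stated first-order quadratic equation for $p$.

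For (i), the first row is exactly $A'=T$, which is Theorem~\ref{th:main1}(i), so the content is the second row. I would realize $A$ and $T$ as Abelian integrals on the elliptic curve $\{y^2=\phi(x)\}$ with $\phi(x,h)=2h-x^2-\tfrac23x^3$: by Green's formula $A=-\oint_{\gamma_h}y\,dx$, while $T=A'=-\oint_{\gamma_h}\phi^{-1/2}\,dx$ is of first kind. Because $\partial_h\phi\equiv2$ is constant, differentiating in $h$ lowers the power of $\phi$ by one, so $T'=\oint_{\gamma_h}\phi^{-3/2}\,dx$ is an integral of second kind. The decisive step is to reduce this back to first kind. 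For that I would compute the B\'ezout identity $u(x)\phi+v(x)\phi'=D(h)$ with $u,v$ polynomials in $x$ and $D(h)$ the discriminant of $\phi$; a short calculation gives $D(h)=2h(1-6h)$, which is exactly where the factor $(6h-1)h$ originates. Writing $\phi^{-3/2}=D^{-1}(u\phi+v\phi')\phi^{-3/2}$ and integrating the $v\phi'\phi^{-3/2}$ term by parts (using $\phi'\phi^{-3/2}=-2\,\tfrac{d}{dx}\phi^{-1/2}$) expresses $T'$ as a combination of $\oint\phi^{-1/2}\,dx$ and $\oint x\,\phi^{-1/2}\,dx$ divided by $D(h)$; rewriting those two first-kind periods in terms of $A$ and $T$ yields $6(6h-1)h\,T'=-5A$, with the constant $5$ and the factor $(6h-1)h$ both dictated by $D(h)$.

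For (iv), I would set up a recursion directly among the $I_j=\oint_{\gamma_h}x^jy\,dx$. Integrating the exact form $d(x^jy^3)$ over the closed orbit and replacing $y^2$ by $\phi$ and $y\,dy$ by $-(x+x^2)\,dx$ yields the three-term relation $(2j+9)I_{j+2}=6jh\,I_{j-1}-3(j+3)I_{j+1}$. The base cases are $I_0=-A$ (Green), the relation $I_2=-I_1$ (the $j=0$ instance, equivalently $\oint d(y^3)=0$), and $I_1$ itself, which the second Picard--Fuchs relation of (i) puts in the form $I_1=a_1(h)A+b_0(h)(6h-1)h\,T$ with $\deg a_1=1$, $\deg b_0=0$. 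An induction on $j$ then shows every $I_j$ has the shape $\alpha_j(h)A+\beta_j(h)(6h-1)h\,T$ with $\alpha_j,\beta_j\in\mathbb{Q}[h]$, since the recursion has constant leading coefficient $2j+9$ and multiplies lower terms by $6jh$ or $3(j+3)$. The stated degrees follow by tracking $\deg\alpha_{j+2}=\max(1+\deg\alpha_{j-1},\deg\alpha_{j+1})$ and the analogous recurrence for $\beta_j$, which reproduce the pattern $\deg\alpha_{3n+1}=\deg\alpha_{3n+2}=\deg\alpha_{3n+3}=n+1$ and $\deg\beta_{3n+1}=\deg\beta_{3n+2}=\deg\beta_{3n+3}=n$. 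Finally, the redundancy of $I_{3n+2}$ follows by strong induction: the $j=3n$ instance of the recursion writes $I_{3n+2}$ through $I_{3n+1}$ and $I_{3n-1}$, and since $I_{3n-1}$ is already, by induction starting from $I_2=-I_1$, a combination of $I_k$ with $k<3n-1$ and $k\not\equiv2\pmod3$, substituting gives $I_{3n+2}$ as a combination of $I_j$ with $j<3n+2$ and $j\not\equiv2\pmod3$.

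The main obstacle is the reduction in (i): correctly carrying out the B\'ezout and integration-by-parts reduction of the second-kind integral so that it closes on $A$ and $T$ with the precise constants. Everything else is bookkeeping, but one fine point in (iv) deserves care, namely confirming that the polynomial coefficients have exactly the claimed degrees rather than merely bounded degree; this I would verify from the leading-coefficient recurrence implied by the three-term relation, checking that no cancellation makes a leading coefficient vanish.
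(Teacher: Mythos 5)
Your route to items (i)--(iii) is genuinely different from the paper's: the paper never performs a B\'ezout/discriminant reduction, but instead derives the three-term recursion \eqref{eq:k} and the Gelfand--Leray identity \eqref{eq:deriv}, solves the resulting linear system for $(I_0',I_1')$ in terms of $(I_0,I_1)$ (Proposition \ref{pr:pf}), and then translates to $(A,T)$ via $I_0=-A$, $I_0'=-T$ and \eqref{eq:quo}. Your reduction does close with the right constants (one finds $u=6x+6-36h$, $v=-2x^2+(12h-3)x+6h$, $u\phi+v\phi'=12h(1-6h)$ and $u+2v'=-2x-12h$), \emph{but} the step you describe as ``rewriting those two first-kind periods in terms of $A$ and $T$'' conceals the only nontrivial point. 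The period $\oint_{\gamma_h} x\,dx/y$ is of the second kind and is not a priori a combination of $A$ and $T$; to express it you need the exact-form relations $\oint_{\gamma_h}dy=0$ and $\oint_{\gamma_h}d(xy)=0$ together with $y^2=\phi$ on the curve, which give $\oint_{\gamma_h} x\,dx/y=6hT-5A$, and only then does $12h(1-6h)T'=-2\oint x\,dx/y-12h\oint dx/y=10A$ drop out. Without this supplementary reduction your system does not visibly close on $(A,T)$, so it must be stated and proved, not merely invoked. As a side remark, if you carry out (iii) literally for $p=A/T$ you obtain $p'=1-5p^2/\big(6(1-6h)h\big)$; the Riccati equation as displayed in the statement is satisfied by the reciprocal ratio $T/A$ (the same inversion occurs in Proposition \ref{pr:pf}(iii)), so your claim that differentiation yields ``the stated equation'' should be flagged rather than asserted.

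In item (iv) there are two genuine gaps. First, the base case: you claim that ``the second Picard--Fuchs relation of (i)'' puts $I_1$ in the form $a_1(h)A+b_0(h)(6h-1)hT$, but that relation, $6(6h-1)hT'=-5A$, involves only $A$, $T$ and $T'$ and cannot produce any expression for $I_1$. In the paper this expression is \eqref{eq:quo}, and it comes from the first row of the $(I_0,I_1)$ Picard--Fuchs system --- an object your $(A,T)$-only derivation never constructs. It can be recovered in your framework (your own computation yields $I_1'=6hT-5A$; integrate and use the Hill equation to eliminate $\int_0^hA$), but as written the induction has no base. Second, and more seriously, the ``moreover'' claim: your $j=3n$ instance of the recursion reads $(6n+9)I_{3n+2}=18nh\,I_{3n-1}-(9n+9)I_{3n+1}$, and the coefficient $18nh$ depends on $h$. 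Iterating it only shows that $I_{3n+2}$ is a polynomial-in-$h$ combination of lower $I_j$'s, which is strictly weaker than the claim and useless for its purpose: in the proof of Theorem \ref{th:main2}(i) this statement is used to rewrite $\sum_j\alpha_jI_j$ with \emph{constant} $\alpha_j$ as a \emph{constant}-coefficient combination of the $I_j$ with $j\not\equiv2\pmod 3$. The paper obtains constant coefficients from a different identity, \eqref{eqpropquerelacionaIkcomI1}, gotten by integrating $(6h-3y^2)^k\,dy=(3x^2+2x^3)^k\,dy$ over $\gamma_h$; there the $h$-dependence disappears precisely because $(6h-3y^2)^k\,dy$ is exact in $y$. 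Your three-term recursion cannot substitute for this identity, so this part of the statement remains unproved in your proposal.
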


We prove next theorem, see Subsection \ref{ss:ect} to recall the
definition of extended complete Chebyshev (ECT) system.

\begin{teo}\label{th:main2}   Set $I(h)$ given in \eqref{eq:final},
where $\gamma_h,$ for $h\in L:=(0,1/6),$ are the ovals of
$\{x^2/2+x^3/3+y^2/2=h\}.$ Then,
\[
I(h)=\sum_{j=0}^{n} \alpha_j I_{j}(h)=\sum_{j=0,\, j\not\equiv 2
\!\pmod 3}^{n} a_j I_{j}(h)
\]
where $a_j$ can be taken  free real parameters and depend on the
coefficients of $P.$ Moreover:
\begin{enumerate}[(i)]

\item The functions $I_j(h), j\not\equiv 2 \pmod 3,$ are linearly
independent. In particular,  there are values of the constants $a_j$
such that $I(h)$ has $N(n)$ simple zeroes in $L$ where
\[
\qquad N(n)=\operatorname{Card}\{j\,:\, 0\le j\le n\, \mbox{ and }\,
j\not\equiv 2 \pmod 3\}-1=n-[(n+1)/3]
\]
and, as usual, $[\cdot\,]$ denotes the integer part function.

\item  There exists  $h_1>0$ such that for each $n\le 50$ the functions $I_j(h),$ $0\le j\le n$ and  $j\not\equiv 2 \pmod
3,$ form an ECT system  on $(0,h_1).$ In particular, for $n\le50$
the maximum number of zeroes of $I(h)$ in $(0,h_1),$ taking into
account their multiplicities is $N(n).$

\item The functions $\big(I_0(h), I_1(h)\big)$ and $\big(I_3(h), I_1(h), I_0(h)\big)$ form
 ECT systems on $L.$ In particular, for $n=0,1,2,3,$ the maximum
number of zeroes of $I(h)$ in $L,$ taking into account their
multiplicities is $N(n).$

\end{enumerate}
\end{teo}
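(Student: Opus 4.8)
The plan is to meet a lower bound on the number of zeroes, coming from linear independence, with an upper bound coming from the ECT (Wronskian) criterion, so that the two agree at $N(n)$. The first equality is immediate from Proposition \ref{pr:AT}(iv): its last sentence rewrites every $I_{3k+2}$ as a real-linear combination of the $I_j$ with $j<3k+2$ and $j\not\equiv 2\pmod 3$, so after collecting terms $I(h)=\sum_{j\not\equiv 2\pmod 3}a_jI_j(h)$ with the $a_j$ free. Throughout I would exploit the representation, also from Proposition \ref{pr:AT}(iv), $I_j(h)=P_j(h)A(h)+R_j(h)T(h)$ with $P_j,R_j\in\rac[h]$ explicitly known, together with the relations $A'=T$ and $6(6h-1)h\,T'=-5A$; these keep every derivative and every Wronskian inside the two-dimensional module generated by $A$ and $T$ over the rational functions. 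I also record the signs $A(h)>0$, $T(h)>0$ on $L$ (areas and periods are positive) and the endpoint behaviour of $p:=A/T$, which tends to $0$ both as $h\to0^+$ (where $T\to2\pi$ and $A\sim2\pi h$) and as $h\to(1/6)^-$ (where $T\to+\infty$ at the saddle loop while $A$ stays finite).

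For (i) I would first prove that $A$ and $T$ are linearly independent over $\re(h)$. If not, then $p=A/T$ would be a rational function solving the Riccati equation of Proposition \ref{pr:AT}(iii), and a pole-order count at the simple poles $h=0$, $h=1/6$ of its inhomogeneous term forces a contradiction: an analytic $p$ cannot produce the pole, whereas a genuine pole of $p$ makes $p^2$ dominate and produces a pole of order $\ge 2$. Given this, a relation $\sum_{j\not\equiv 2}c_jI_j\equiv 0$ forces $\sum c_jP_j\equiv 0$ and $\sum c_jR_j\equiv 0$ as polynomials, and the explicit degrees and leading coefficients of the $P_j,R_j$ in Proposition \ref{pr:AT}(iv) show the pairs $(P_j,R_j)$ are linearly independent, whence all $c_j=0$. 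Linear independence of the $N(n)+1$ admissible functions then yields a combination with $N(n)$ simple zeroes in $L$ by the standard interpolation argument (prescribe $N(n)$ interior nodes and perturb to make the sign changes genuine).

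For (ii) the statement is local at $h=0$, so it suffices to verify that the ordered Wronskians $W_i:=W(I_{j_0},\dots,I_{j_i})$, $i=0,\dots,N(n)$, of the admissible family each have a nonzero leading Taylor coefficient at $h=0$; a nonzero leading term guarantees $W_i\neq 0$ on some punctured interval $(0,h_1)$, which is exactly the ECT property there, and then the ECT theorem caps the number of zeroes (with multiplicity) at $N(n)$, matching the lower bound of (i). Using $A'=T$ and $6(6h-1)h\,T'=-5A$, each $W_i$ reduces to a rational function times a polynomial expression in $A,T$, whose Taylor expansion I would generate from the algorithmic expansions of $A(h)$ and $T(h)$ furnished by Theorem \ref{th:main1}. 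For every $n\le 50$ this is a finite symbolic check (carried out in Maple) that the lowest-order coefficient of each $W_i$ is nonzero.

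Part (iii) is the genuinely hard point, because now non-vanishing of the Wronskians is required on the whole interval $L=(0,1/6)$, not merely near $0$. After writing $I_0=-A$ and computing $I_1,I_3$ explicitly through Proposition \ref{pr:AT}(iv), I would reduce $W(I_0,I_1)$, and the two Wronskians of $(I_3,I_1,I_0)$, to expressions of the form $\sum_k r_k(h)\,A^{i}T^{k}$ with explicit $r_k\in\rac(h)$; dividing by a suitable positive power of $T$ turns each into a polynomial $q(p,h)$ in the single ratio $p=A/T$ with rational coefficients. Since $T>0$, the Wronskian vanishes precisely where $q(p(h),h)=0$, so the task becomes showing that the curve $h\mapsto p(h)$ avoids the zero set of $q$. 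Here I would use the qualitative behaviour of $p$: from the Riccati equation $p'=-p^2+5/(6(1-6h)h)$ one gets that $p$ increases from $0$, stays bounded, and returns to $0$ at the saddle-loop end, which pins its range to a computable interval; checking that $q(\cdot,h)$ has no zero in that range for all $h\in L$ is the crux, and is where the explicit coefficients, the sign of $6h-1$, and the monotonicity of $p$ must be combined. Establishing these global sign conditions, rather than the formal and asymptotic steps of (i) and (ii), is the \emph{main obstacle}.
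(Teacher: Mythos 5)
Your rewriting of $I(h)$ over the indices $j\not\equiv 2\pmod 3$ and your treatment of item (ii) coincide with the paper's proof: the paper normalizes $G_j(h)=F_j(h)/(\pi h)$ and checks $W_k(0)\neq 0$ for $k=0,\dots,N(50)=33$, which is exactly your ``nonzero leading Taylor coefficient'' criterion, fed by the expansion of $T(h)$ from Theorem \ref{th:main1}. The genuine gaps are in (i) and (iii). In (i), the crucial step is that $A$ and $T$ (equivalently $I_0$ and $I_1$) are independent over $\R(h)$, and your pole-order count on the Riccati equation does not prove it. Writing the equation as $p'+p^2=\frac{5}{6(1-6h)h}$, if $p$ has a simple pole at $h_0$ with residue $c$, then $p'\sim -c(h-h_0)^{-2}$ and $p^2\sim c^2(h-h_0)^{-2}$, so the double poles cancel precisely when $c=1$: your dichotomy (``analytic cannot produce the pole, a pole makes $p^2$ dominate'') misses exactly the case that occurs for such equations, since logarithmic derivatives $u'/u$ solve Riccati equations and have simple poles of residue $1$; the same cancellation makes the local analysis at $h=0,1/6$ inconclusive as well. (As an aside, the Riccati you quote is the one satisfied by $T/A$, not $A/T$, so the endpoint behaviour you attach to it is for the wrong ratio.) The fix is the asymptotic you record but never use, and it is the paper's actual argument: as $h\uparrow 1/6$, $A(h)\to A_0\neq0$ while $T(h)\sim k\ln(1-6h)$, so $T/A$ blows up logarithmically, which a rational function cannot do (it is either bounded or blows up like $c(1-6h)^{-m}$). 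Finally, to convert linear independence into $N(n)$ \emph{simple} zeros, the paper invokes Lemma \ref{le:li}, using that $F_0=I_0=-A$ has constant sign; your interpolation sketch only yields sign changes (zeros of odd order) and still needs that lemma or an explicit perturbation step.

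Item (iii) is where you truly diverge from the paper, and your route is not a proof: you reduce the Wronskians of $(I_0,I_1)$ and $(I_3,I_1,I_0)$ to conditions $q(p(h),h)\neq0$ along the transcendental curve $p=A/T$ and then state that verifying these global sign conditions is the ``main obstacle'' — but that verification \emph{is} the content of item (iii), and nothing in your sketch bounds the range of $p$ on all of $(0,1/6)$ or shows it avoids the zero set of $q$. The paper solves the global problem on the $x$-side instead, where everything becomes algebraic: it applies the Grau--Ma\~nosas--Villadelprat criterion (Theorem \ref{criteriointabeliana}), which replaces the ECT property of the Abelian integrals by non-vanishing of Wronskians of $\ell_i(x)=\frac{f_i(x)}{A'(x)}-\frac{f_i(\sigma(x))}{A'(\sigma(x))}$ built from the involution $\sigma$ of $A(x)=x^2/2+x^3/3$; for the triple it first uses Lemma \ref{lemaexpoente} to rewrite $hI_0,hI_1,hI_3$ as integrals against $y^3$ so that the exponent hypothesis of the criterion is met (and the ordering $(I_3,I_1,I_0)$ is chosen so the method works); and the resulting polynomial non-vanishing on $(0,1/2)$ is certified by the rational parameterization $\bigl(u(s),v(s)\bigr)$ of the genus-zero curve $S(x,z)=0$ together with Sturm sequences. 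That approach ends in finite, checkable polynomial computations, whereas your plan still has a transcendental verification at its core; as it stands, item (iii) remains unproved in your proposal.
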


A similar result could also be proved taking in
\eqref{sistemaqueaplicamosintabel} more general perturbations with
$P$ and $Q$ depending also on $y.$ In fact, while we were ending the
above proof we realized that this more general  problem was
addresses many years ago by Petrov with an equivalent expression of
the Hamiltonian (he took $H(x,y)=y^2+x^3-x$), see \cite{Petrov1988},
or the second part of the book \cite{ChrLi2007} with the also
equivalent expression $H(x,y)=x-x^3/3+y^2/2.$ He got a more general
result by using the complexification of the corresponding $I_j(h),$
the Picard-Fuchs equations satisfied for them and the argument
principle. In particular he proved that the maximum number of zeroes
of $I(h)$ in $L,$ taking into account their multiplicities is always
$N(n).$

Our proof of item (i) is similar to the one of Petrov but the proof
of item (ii) strongly uses that knowing the function $T(h)$ near
$h=0$ suffices to get lower bounds of the number of zeroes of $I(h)$
and gives a different computational approach to the problem, that is
valid for a given $n$.  We stop at $n=50,$ but  it is easy to
go further in our computations.

In fact, although Petrov approach gives strong results in this case,
as we will explain in Remark \ref{re:final}, our point of view can
also be used for studying  several perturbations of many Hamiltonian
systems. In all these cases a lower bound of the number of limit
cycles follows from the computation of the Wronskian of some
polynomials on $h$ at $h=0$ and these polynomials can be obtained
simply from the knowledge of the Taylor's series $T(h)$ or $A(h)$ at
$h=0.$ The key point for proving item (iii) is the method introduced
in \cite{GraManVil2011,ManVil2002}. It provides an alternative
approach to the one of Petrov for small $n.$

A straightforward corollary of Theorem \ref{th:main2} is:

    \begin{cor}\label{corolcotamaxima01} For $\varepsilon$ small enough, system \eqref{sistemaqueaplicamosintabel}
has at least $n-[(n+1)/3]$ limit cycles surrounding the origin, that
bifurcate from their periodic orbits $\gamma_h,$ $h\in(0,1/6).$
    \end{cor}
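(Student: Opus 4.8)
The plan is to derive Corollary~\ref{corolcotamaxima01} as a direct consequence of Theorem~\ref{th:main2}(i) together with the standard Poincar\'e--Pontryagin (Melnikov) theory that connects simple zeroes of the Abelian integral $I(h)$ to limit cycles of the perturbed system. First I would recall that, as indicated in the paragraph preceding \eqref{eq:final} (and detailed in Subsection~\ref{ss:whp}), the number of limit cycles of system \eqref{sistemaqueaplicamosintabel} that bifurcate, for $\varepsilon$ small, from the periodic orbits $\gamma_h$ with $h\in(0,1/6)$ is controlled at first order by the zeroes of the Abelian integral $I(h)=\int_{\gamma_h}P(x)\,dy$. The precise statement I would invoke is the classical one: if $I(h_*)=0$ and $I'(h_*)\neq 0$ for some $h_*\in(0,1/6)$, i.e. $h_*$ is a \emph{simple} zero of $I$, then for every sufficiently small $\varepsilon\neq 0$ the system \eqref{sistemaqueaplicamosintabel} has a hyperbolic limit cycle near $\gamma_{h_*}$; consequently, $k$ distinct simple zeroes of $I$ in $(0,1/6)$ yield $k$ distinct limit cycles surrounding the origin.

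The main step is then to exhibit a choice of the free parameters $\alpha_j$ (equivalently the $a_j$, i.e. the coefficients of $P$) that forces $I(h)$ to have exactly $N(n)=n-[(n+1)/3]$ simple zeroes in $(0,1/6)$. But this is precisely the content of Theorem~\ref{th:main2}(i): after reducing $I(h)=\sum_{j=0}^n\alpha_jI_j(h)$ to the genuinely contributing terms $\sum_{j\not\equiv 2\,(3)}a_jI_j(h)$, the functions $\{I_j : 0\le j\le n,\ j\not\equiv 2\pmod 3\}$ are linearly independent, and the theorem asserts that one can select the constants $a_j$ so that $I(h)$ attains $N(n)$ simple zeroes in $L=(0,1/6)$. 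Thus the corollary follows by taking exactly those parameter values and applying the Melnikov correspondence to each of the $N(n)$ simple zeroes.

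I would organize the write-up in two short movements: one sentence fixing the bifurcation principle (simple zero of $I$ $\Rightarrow$ limit cycle), citing the relevant subsection; and one sentence applying Theorem~\ref{th:main2}(i) to produce the requisite $N(n)$ simple zeroes, so that the displayed count $n-[(n+1)/3]$ is realized as a lower bound on the number of bifurcating limit cycles. No new computation is needed, since the hard analytic work---the linear independence of the $I_j$ and the realizability of the maximal number of simple zeroes---is already packaged in Theorem~\ref{th:main2}.

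The only point requiring mild care, and the natural candidate for the ``main obstacle,'' is making sure the reduction from \emph{simple zeroes of the Abelian integral} to \emph{limit cycles of the planar system} is applied on the correct domain and with the correct hypotheses: one must confirm that each $\gamma_h$ for $h\in(0,1/6)$ is an oval inside $\{x+1>0\}$ (already established around \eqref{hamsistemaintabel}), that the first-order Melnikov function for \eqref{sistemaqueaplicamosintabel} is indeed $I(h)$ up to a nonzero factor (the content of Subsection~\ref{ss:whp}), and that simple zeroes persist as hyperbolic limit cycles uniformly for small $\varepsilon$. Once these standing facts are in place, the corollary is immediate and the estimate is sharp in the sense of giving the ``at least $N(n)$'' lower bound claimed.
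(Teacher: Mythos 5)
Your proposal is correct and follows essentially the same route as the paper: invoke the Poincar\'e--Pontryagin correspondence from Subsection~\ref{ss:whp} (each simple zero of $I(h)$ in $(0,1/6)$ yields a limit cycle for small $\varepsilon$), then apply Theorem~\ref{th:main2}(i) to choose the coefficients so that $I(h)$ has $N(n)=n-[(n+1)/3]$ simple zeroes. The additional verifications you flag (ovals lying in $\{x+1>0\}$, identification of the first-order Melnikov function with $I(h)$) are exactly the standing facts the paper relies on implicitly, so no gap remains.
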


    The paper is organized as follows: In Section \ref{se:s2} we
    prove Theorem \ref{th:main1} and apply it to several Hamiltonian
    systems. In Section \ref{sec:2} we include same preliminaries
    devoted to prove Theorem \ref{th:main2} and we also prove Proposition
    \ref{pr:AT}. More concretely, there
    is a subsection devoted to recall the relation between limit cycles and Abelian integrals; a second
    one dedicated to find the Picard-Fuchs, Hill and Riccati
    differential equations for  $I_0(h)$ and $I_1(h),$ see Proposition
    \ref{pr:pf}, and
    to present other relations among all the involved Abelian integrals; a third
    one about the parameterization of genus 0 planar polynomials
    curves and its application to our problem; and the last one on Chebyshev systems and how to use them in our
    situation. Finally, in Section \ref{sec:3} we prove  Theorem \ref{th:main2} and Corollary \ref{corolcotamaxima01}.

\section{Proof of Theorem \ref{th:main1} and some
applications}\label{se:s2}

Before proving Theorem \ref{th:main1} and for the sake of
completeness we prove  a preliminary result. Notice that the
solution of  an equation of the form $F_0(w,z)=
        \prod_{i=1}^n (w- \alpha_i z)=0,$ with all $\alpha_i$ different, is given by the  $n$ straight lines $w=\alpha_i z,$ $i=1,2,\ldots, n.$
         Next lemma asserts that when
        we consider a more general analytic equation of the form
$F(w,z)=b F_0(w,z)+O(n+1)=0,$  with $b\ne0,$ and the $O(n+1)$ part
denotes terms with degree at least $n+1$ in $w$ and $z,$ then its
solutions near $(0,0)$ is given by $n$ analytic branches $w=W_i(z),$
$i=1,2,\ldots, n,$ that are tangent to these $n$ lines. In fact,
next result and more general ones can be obtained and proved by
using the so called Newton's polygon, see  \cite{AndroLeontGor1973}
for more details.

    \begin{lema}\label{lemasoleqanalitica}
        Consider an analytic function  $F(w,z)=
        b\prod_{i=1}^n (w- \alpha_i z)+ O(n+1),$ with $b\ne0$ and all $\alpha_i$
        different. Then, in a neighborhood of $(0,0),$ the solutions
        of $F(w,z)=0$ are given by $n$ branches $w=W_i(z)= \alpha_iz+O(z^2),$ $i=1,2,\ldots, n,$ where all the
        functions $W_i$ are analytic at zero and their Taylor series at the origin can be obtained by implicit derivation.
    \end{lema}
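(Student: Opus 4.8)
The plan is to desingularize the origin by the rescaling $w = uz$, which turns the $n$ lines $w = \alpha_i z$ into the $n$ \emph{distinct points} $u = \alpha_i$ and thereby separates the branches, so that the analytic Implicit Function Theorem can be applied at each of them. Writing $F = \sum_{k \ge n} F_k$ with $F_k$ the homogeneous part of degree $k$, each monomial $w^a z^{k-a}$ becomes $u^a z^k$ under the substitution, so $F(uz, z) = \sum_{k \ge n} z^k Q_k(u)$ with $Q_k$ a polynomial of degree at most $k$ and, crucially, $Q_n(u) = b\prod_{i=1}^n (u - \alpha_i)$. Hence $z^n$ divides $F(uz,z)$ and I would set
\[
\widetilde{F}(u,z) := \frac{F(uz,z)}{z^n} = b\prod_{i=1}^n(u-\alpha_i) + z\,G(u,z),
\]
which is analytic in a neighborhood of each $(\alpha_i, 0)$.

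Second, I would apply the analytic Implicit Function Theorem to $\widetilde F$ at the point $(\alpha_i, 0)$. By construction $\widetilde F(\alpha_i, 0) = b\prod_j(\alpha_i - \alpha_j) = 0$, while
\[
\partial_u \widetilde F(\alpha_i, 0) = b \prod_{j \ne i}(\alpha_i - \alpha_j) \ne 0,
\]
precisely because $b \ne 0$ and the $\alpha_i$ are pairwise distinct; this is the single place where the distinctness hypothesis enters. The theorem then yields a unique analytic $u = U_i(z)$ with $U_i(0) = \alpha_i$ and $\widetilde F(U_i(z), z) \equiv 0$ near $z = 0$. Undoing the rescaling, $W_i(z) := z\,U_i(z) = \alpha_i z + O(z^2)$ is analytic and tangent to the $i$-th line, as claimed; its Taylor coefficients are obtained by successively differentiating the nondegenerate relation $\widetilde F(U_i(z),z) = 0$ and multiplying by $z$, rather than by differentiating $F(W_i(z),z)=0$ directly, which is degenerate at the origin since all derivatives of $F$ up to order $n-1$ vanish there.

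Finally, I would check that these $n$ branches exhaust the zero set near $(0,0)$. Since $F(w,0) = b w^n + O(w^{n+1})$, the function $F$ is $w$-regular of order $n$, so Weierstrass preparation writes $F = (\text{unit}) \cdot P$ with $P(w,z) = w^n + \cdots$ a Weierstrass polynomial; thus for each small $z$ there are exactly $n$ zeros of $F$ (with multiplicity) near $w=0$, and the $U_i$ being distinct for small $z \ne 0$ shows the $W_i$ account for all of them. The main obstacle, and essentially the only non-formal point, is the bookkeeping that makes the rescaling rigorous: one must verify that $\widetilde F$ is a genuinely \emph{convergent} power series near $(\alpha_i, 0)$, not merely a formal one. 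This follows from Cauchy estimates $|c_{ab}| \le M r^{-(a+b)}$ on the coefficients of $F$, which give the series $\sum_{k \ge n} z^{k-n} Q_k(u)$ absolute convergence whenever $|z|\max(1,|u|) < r$; confining $u$ to a bounded neighborhood of $\alpha_i$, this holds for all sufficiently small $z$. Alternatively, as the authors remark, the whole statement can be read off from the Newton polygon machinery of \cite{AndroLeontGor1973}.
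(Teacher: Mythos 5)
Your proof follows essentially the same route as the paper's: the blow-up $w=uz$, cancellation of $z^n$, and the analytic Implicit Function Theorem applied at each point $(\alpha_i,0)$, where the distinctness of the $\alpha_i$ is exactly what makes the $u$-derivative of the reduced function nonzero. The two points you add---the Weierstrass preparation argument showing that the $n$ branches exhaust the zero set near the origin, and the Cauchy-estimate verification that the blown-up series converges in a neighborhood of each $(\alpha_i,0)$ rather than only at $(0,0)$---are details the paper leaves implicit, so your version is, if anything, more complete.
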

    \begin{proof}
        To solve $F(w,z)=0,$ we divide it by $b$  and we make the blow up $w=uz.$ Then we get the following equivalent equation
        $$ z^n \displaystyle\prod_{i=1}^n (u - \alpha_i) + z^{n+1}h(u,z)=0 ,$$
        with $h(u,z)$ being an analytic function at $(0,0).$
       Hence it suffices to consider  \[G(u,z)= \prod_{i=1}^n (u - \alpha_i) + z h(u,z)=0.\]
Since, for all $j \in \{ 1,\ldots,n\},$ it holds that
\[
G(\alpha_j,0)=0\quad\mbox{and}\quad \dfrac{\partial}{\partial u} G
\left(\alpha_j ,0 \right)= \displaystyle\prod_{i=1, \; i \neq j}^n
(\alpha_j - \alpha_i ) \neq 0,
\]
by the Implicit   Function Theorem it follows that for every $j$
there is an analytic function $U_j$ in variable $z$,
        that satisfies $U_j(0)=\alpha_l$ and $G(U_j(z),z)=0$ for all $z$ in a neighborhood of $0.$
        As $ w = uz $ in a neighborhood of $(0,0),$ the solutions of
        $F(w,z)=0$ are
        $w=W_j(z) = zU_j(z)= {\alpha_j}z  + O(z^2),$
        and the Taylor series of each of them can be obtained simply
        by implicit derivation.
    \end{proof}

\begin{proof}[Proof of Theorem \ref{th:main1}] (i) This result is
proved in \cite{ManVil2002}.

(ii) We will prove that the function $\top(\rho)$ is analytic at
$\rho=0$  for any non-degenerated center, not necessarily
Hamiltonian. We simply follow the approach developed in
\cite{AndroLeontGor1973}, see also \cite{GasGuiMan1997}.  It is not
restrictive to write the differential system as
    \begin{equation}\label{sistemaperiodo}
    \left\{ \begin{array}{l}
    \dot{x}= - y + f(x,y) ,\\
    \dot{y}= x + g(x,y),
    \end{array} \right.
    \end{equation}
    where $ f $ and $ g $ are analytic functions in a neighborhood of the origin starting with  terms at least of degree two.

    Passing system \eqref{sistemaperiodo} to polar coordinates $(r, \theta )$ we obtain $\dot{r} = S(r,\theta )$ and
    $\dot{\theta } = 1+ T_1(r,\theta )$. Now we leave $\theta $ as the new independent variable and so we get the
    following analytic differential equation
    \begin{equation}\label{eqdrdw}
    \dfrac{dr}{d\theta }= \dfrac{S(r,\theta )}{1 + T_1(r,\theta )} = F_2(\theta ) r^2 +\cdots+ F_{n}(\theta ) r^{n} + O(r^{n+1}).
    \end{equation}

    We consider the initial condition $ r (0) = \rho> 0.$
    We can write the solution of (\ref{eqdrdw}) with this initial condition as
    \begin{equation*}
    r(\theta ,\rho)=r_{\rho}(\theta )= \rho + u_1(\theta ) \rho + \cdots + u_{n}(\theta ) \rho^{n} + O(\rho^{n+1}),
    \end{equation*}
    which is also analytic. By plugging it in equation (\ref{eqdrdw}) we
    find
    each $ u_i (\theta ), \; i\ge1,$ by solving simple differential equations with initial conditions
    $u_i(0)=0.$ Notice also that $(1 + T_1(r,\theta ))^{-1}$ is
    analytic at $r=0$ because $T_1(0,\theta)\equiv0$ and so, in a
    suitable neighborhood of $r=0$
    \begin{equation*}
    \frac{1}{1+T_1(r,\theta )}=1 + \displaystyle\sum_{k=1}^{\infty} g_k(\theta ) r^k.
    \end{equation*}
  Next we consider the following differential equation
\begin{equation*}
    \dfrac{d t }{d\theta} = \frac1{1 + T_1(r(\theta ,\rho),\theta )} =1+ \displaystyle\sum_{k=1}^{\infty} g_k(\theta )\big( r(\theta,\rho)\big)^{k}
    =
1+ \displaystyle\sum_{k=2}^{\infty} G_k(\theta )\rho^{k-1} .
    \end{equation*}
Hence,
    \begin{align*}
    \top(\rho)=&\displaystyle\int_0^{\top(\rho)}
    dt =\displaystyle\int_0^{2 \pi} \dfrac{d\theta }{1+T_1(r(\theta ,\rho),\theta
    )}= \displaystyle\int_0^{2 \pi} \Big(1 +
\displaystyle\sum_{k=1}^{\infty} G_k(\theta )
    \rho^k \Big) d\theta\\ &= 2 \pi + \displaystyle\sum_{k=1}^{\infty} t_k
    \rho^k,\quad \text{where}\quad  t_k=\displaystyle\int_0^{2
    \pi} G_k(\theta ) d\theta,
\end{align*}
and we have used that $r(\rho,\theta)$ converges uniformly towards
$0$ when $\rho$ tends to $0.$

To prove the second item of statement (ii) we will use Lemma
\ref{lemasoleqanalitica} with $n=2.$ It is not restrictive to assume
that $H(x,y)=x^2/2+y^2/2+O(3),$ because otherwise a linear change
plus a rescaling of the time can be done before starting the study.
Hence there is a relation between $\rho>0$  and $h=z^2>0,$ given by
\[
F(\rho,z)=H(\rho,0)-h=H(\rho,0)-z^2=\frac{\rho^2}2-z^2+O(3)=
\frac12\big(\rho+\sqrt 2 z\big)\big(\rho-\sqrt2 z\big)+O(3)=0.
\]
Moreover, when $\rho>0$ and $h=z^2>0$ are such that $F(\rho,z)=0$
then  $T(h)=\top(\rho)$ because both values give the period of the
same periodic orbit. By Lemma \ref{lemasoleqanalitica}, near
$(0,0),$ the above equation has two analytic solutions
$\rho=S_j(z),$ $j=1,2$ where $S_j(z)=(-1)^j\sqrt2 z+O(z^2)$ satisfy
locally $F(S_j(z),z)\equiv 0.$ We are interested in $S:=S_2$ because
it sends positive values into positive ones. Hence $\rho=S\big(\sqrt
h\big),$ with $S$ analytic at $0$ and Taylor series computable
simply by implicit derivation, as explained in the proof of  Lemma
\ref{lemasoleqanalitica}.

Finally $T(h)= \top(\rho)=\top(S(\sqrt h)),$ as we wanted to prove.
Notice that although from this result it simply seems that $T$ is
analytic on $\sqrt h,$ from item (i) we already know that all the
odd terms of the Taylor series of $\top\circ S$ at zero must
cancell.
\end{proof}

\begin{rem}\label{re:rem1} Assume that the analytic system we are interested in writes as
    \begin{equation*}
\left\{ \begin{array}{l}
u'= - \alpha v + f_1(u,v) ,\\
y'= \beta u + g_1(u,v),
\end{array} \right.
\end{equation*}
where the prime denotes the derivative respect some time, say $s$,
and $\alpha$ and $\beta$ are both positive. Moreover, assume that it
is Hamiltonian, with Hamiltonian function $\widetilde{H}(u,v)=
\frac{\beta u^2}{2}
    + \frac{\alpha v^2}{2} +O(3).$ If we introduce the following
change of variables and time,
\begin{equation}\label{mudancavariaveis}
u=\frac{x}{\sqrt{\beta}}, \; v=\frac{y}{\sqrt{\alpha}} \quad
\mbox{and} \quad s=\frac{t}{\sqrt{\alpha \beta}},
\end{equation}
it writes as system \eqref{sistemaperiodo} and has the Hamiltonian
function $H(x,y)=\widetilde{H}
  \left( \frac{x}{\sqrt{\beta}},\frac{v}{\sqrt{\alpha}} \right).$  Then the period function with respect to time $t$ can be obtained in
  terms of the  energy levels, $ h,$ by the method developed in Theorem \ref{th:main1} and finally the time in the variable  $ s $
  is the previous one divided  by $ \sqrt {\alpha \beta }.$
\end{rem}

Next subsections are  dedicated to apply the above results to three
examples. In all the examples, by the sake of shortness, we only
present a few terms of the Taylor expansion of $T(h),$ but it is not
difficult to obtain much more terms. Obviously, the first terms of
$A(h)$ can be obtained from the ones of $T(h).$

\subsection{The whirling pendulum}

    In this example we calculate the first terms  in $h$ of the period function for the whirling pendulum.
      The motion of a whirling pendulum is considered for instance in
      \cite{Lic1999}. It writes as
\begin{equation*}
u''= -\dfrac{g}{\ell} \sin(u) +\omega^2 \sin(u) \cos(u), \quad u \in
\mathbb{S}^1,
\end{equation*}
where $\ell$ is the length of pendulum,  $u$ its angle deviation,
$g$ is the gravity constant and $\omega$ is a constant rotation
rate. Introducing a new variable $v=-u'$ converts this second order
equation  into  the planar analytic Hamiltonian system
\begin{equation}\label{eq:wp}
\begin{array}{l}
u'=-v, \\
y'= \sin(u) \left(a - b \cos(u) \right),
\end{array}
\end{equation}
where $a={g}/{\ell}>0$, $b=\omega^2 \geq 0,$ and Hamiltonian
function
\begin{align*}
\widetilde{H}(u,v)=& \frac{v^2}{2}- a \cos(u) + \dfrac{b}{2}
\cos^2(u) + a-\dfrac{b}{2}\nonumber\\
=& \dfrac{v^2}{2} + (a-b)\dfrac{u^2}{2} + \left(\dfrac{b}{6} -
\dfrac{a}{24}\right) u^4 + \left(\dfrac{a}{720} -
\dfrac{b}{45}\right) u^6 +  O(7).
\end{align*}
When $a-b>0$ we have a non degenerate center at the origin and this
is the case that we will consider. Following the notation of Remark
\ref{re:rem1} we apply the change of variables and time
\eqref{mudancavariaveis} with $\alpha=1$, $\beta=a-b$ and we obtain
\begin{equation*}
H(x,y)= \widetilde{H} \left( \frac{x}{\sqrt{a-b}},v
\right)=\dfrac{x^2}{2} + \dfrac{y^2}{2} + \left(
\dfrac{4b-a}{24(a-b)^2} \right) \dfrac{x^4}{4} + O(5).
\end{equation*}
Then, applying the first step of item (ii) of Theorem \ref{th:main1}
we obtain that
\begin{multline*}
\top(\rho) =2 \pi+ \frac{(a-4b)\pi}{8(a-b)^2}\rho^2+ \frac{(11a^2-16ab+176b^2)\pi}{1536(a-b)^4}\rho^4\\
+ \dfrac{(-11072b^3+173 a^3-2568 a b^2-708 a^2b)
\pi}{368640(a-b)^6}\rho^6 + O(\rho^8).
\end{multline*}
Doing the computations detailed in the second step of the same item
we arrive to
\begin{multline*}
S(\sqrt{h})=\sqrt{2}h^{1/2}-\frac{\sqrt{2}(a-4b)}{12(a-b)^2} h^{3/2} +\frac{\sqrt{2}(3a^2 - 16 ab +48b^2)}{160(a-b)^4}h^{5/2}\\
-\frac{\sqrt{2}(120ab^2 + 5a^3 - 320b^3 - 36
a^2b)}{896(a-b)^6}h^{7/2}+O(h^{9/2}).
\end{multline*}
Then
\begin{multline*}
T(h) = \top(S(\sqrt{h}))= 2 \pi + \dfrac{(a-4b)}{4(a-b)^2} \pi h +
\dfrac{3(3a^2 - 16 ab + 48b^2)}{128(a-b)^4} \pi h^2\\
+\dfrac{5(5a^3+120a b^2 -320 b^3-36a^2b)}{1024(a-b)^6} \pi h^3  + O(
h^4),
\end{multline*}
is the period function for the new time introduced in the change of
variables. Finally, we must divide it by $ \sqrt {\alpha \beta }
=\sqrt{a-b}$ to obtain the actual period function of system
\eqref{eq:wp},
\begin{multline*}
T(h) =  \frac{2\pi}{\sqrt{a-b}}\left(1+ \dfrac{(a-4b)}{4}
\left(\frac{h}{2(a-b)^2}\right) +
\dfrac{3(3a^2 - 16 ab + 48b^2)}{64} \left(\frac{h}{2(a-b)^2}\right)^2  \right.\\
 \left.+\dfrac{5(5a^3+120a b^2 -320 b^3-36a^2b)}{256} \left(\frac{h}{2(a-b)^2}\right)^3\right)  +   O ( h^4).
\end{multline*}
We can note if $b=0$ then the equation correspond to the simple
pendulum. Replacing $a=g/\ell$  gives
\begin{equation*}
T(h) =  2\pi\sqrt{\frac{\ell}{g}}\left(1+ \dfrac{1}{2^2}
\left(\frac{\ell h}{2g}\right) + \dfrac{3^2}{2^6} \left(\frac{\ell
h}{2g}\right)^2+\dfrac{5^2}{2^{8}} \left(\frac{\ell h}{2g}\right)^3
\right)+ O( h^4).
\end{equation*}
Notice that the above terms  coincide with the first ones of the
well-known expression of the period function of the pendulum given
by Lagrange
\[
T(h)=2\pi\sqrt{\frac{\ell}{g}} \sum_{n=0}^\infty
\left(\frac{(2n)!}{(2^n n!)^2}\right)^2 \left(\frac{\ell
h}{2g}\right)^{n},
\]
obtained from the expression of this period function in terms of an
elliptic integral.

\subsection{A quadratic system}

As a second application we take the simplest family of Hamiltonians
that are not of the form $H(x,y)=F(x)+G(y).$ More concretely we
consider the family of quadratic systems with Hamiltonian function
\begin{equation*}
H(x,y) = \dfrac{x^2}{2} + \dfrac{y^2}{2} -  \dfrac{x^3}{3} + ax y^2
- b \dfrac{y^3}{3}.
\end{equation*}
These systems are studied for instance in \cite{ArtLli1994,
HorIli1994}. Applying our two steps procedure we obtain that
\begin{multline*}
\top(\rho) = 2 \pi+ \frac{\pi}{6}(9a^2+5b^2-6a+5) \rho^2-
\frac{\pi}{9}(9a^2+5b^2-6a+5) \rho^3\\
+\frac {5\pi }{ 288}\left( 189\,{a}^{4}+378{a}^{2}{b}^{2}+77{b}^{4}-
180{a}^{3}-84a{b}^{2}+126{a}^{2}+10{b}^{2}-84a+77 \right)\rho^4
 + O(\rho^5),
\end{multline*}
\begin{equation*}
S(\sqrt{h})=\sqrt{2}h^{1/2}+\frac{2}{3}h
+\frac{5\sqrt{2}}{9}h^{3/2}+
\frac{32}{27}h^2+\frac{77\sqrt{2}}{54}h^{5/2}+\frac{896}{243}h^3+O(h^{7/2}),
    \end{equation*}
   and finally,
    \begin{align*}
    T (h) =\top(S(\sqrt{h}))= &  2 \pi + \frac{\pi}{3}(9a^2+5b^2-6a+5) h+
    \frac{5\pi}{72}\big(189{a}^{4}+378{a}^{2}{b}^{2}+77{b}^{4}\\
    &-180{a}^{3}-84a{b}^{2}
+126{a}^{2}+10{b}^{2}-84a+77 \big)h^2+O(h^3).
    \end{align*}

\subsection{System \eqref{sistemaqueaplicamosintabel}\label{ss:apl} with $\varepsilon=0$}

The   third example deals with the Hamiltonian system with
    \[
H(x,y)=\frac{x^2}2+\frac{y^2}2+\frac{x^3}3.
    \]
In this case, to prove item (iii) of Theorem \ref{th:main2} we need
to obtain more terms of the Taylor's development of $T(h)$. As in
the previous subsections we obtain first that
    \begin{equation*}
    \top(\rho) = 2 \pi+\frac{5}{6}\pi\rho^2+\frac{5}{9}\pi\rho^3+\frac{385}{288}\pi\rho^4
    +\frac{385}{216}\pi\rho^5
    +\frac{103565}{31104}\pi\rho^6+   \frac{85085}{15552}\pi\rho^7+
    \frac{6551545}{663552}\pi\rho^8+O(\rho^8),
    \end{equation*}
    and
    \begin{equation*}
    \rho_2(\sqrt{h})=\sqrt{2}h^{1/2}-\frac{2}{3}h +\frac{5}{9}\sqrt{2}h^{3/2}-\frac{32}{27}h^2+
    \frac{77}{54}\sqrt{2}h^{5/2} -\frac{896}{243}h^3+\frac{2431}{486} \sqrt{2}
    h^{7/2}+O(h^8).
    \end{equation*}
Then,
\begin{align}\label{eq:th}
T(h) =&\top(S(\sqrt{h}))=  2 \pi + \dfrac{5}{3} \pi h +
\dfrac{385}{72} \pi h^2 + \dfrac{85085}{3888} \pi h^3  +
\dfrac{37182145}{373248} \pi h^4\nonumber\\ &+
\dfrac{1078282205}{2239488} \pi h^5 +\frac{1169936192425}{483729408}
\pi h^6 +O (h^7).
\end{align}

It is very intriguing the appearance of the primorial function in
the coefficients of the Taylor's series of $T(h).$ Recall that if
$p_n$ denotes the $n$th prime number, then the primorial of $p_n$ is
denoted by $p_n\#$ and is $p_n\#=p_1p_2\cdots p_n.$ For instance
recall that the numbers $p_n\#+1$ play a key role in the proof of
Arquimedes of the existence of infinitely many prime numbers.
Computing some more terms of the expression of $T(h)$ get
\begin{align*}
T(h)=\pi&\left(2+\frac{5\#}{2\cdot 3^2}h+\frac{
11\#}{2^{4}3^{3}}h^2+\frac{17\#}{2^{5}3^{6}}h^3+\frac{
23\#}{2^{10}3^{7}}h^4+\frac{29\#}{2^{11}3^{8}}h^5
+\frac{5\cdot7\cdot31\#}{2^{14}3^{11}}h^6\right.
\\ &\left.\,\,+\frac{5\cdot 41\#}{2^{15}3^{12}7}h^7
+\frac{5\cdot 47\#}{2^{22}3^{14}7}h^8+ \frac{5\cdot 7\cdot
53\#}{2^{23}3^{18}}h^9 +\frac{7\cdot 11\cdot
59\#}{2^{26}3^{19}}h^{10}\right)+O(h^{11}).
\end{align*}

In fact, it is known that $T(h)$ is a monotonous increasing
function, defined for $h\in[0,1/6)$ and tending to infinity when $h$
goes to $1/6,$ see for instance \cite{FreGasGui2004}.

As we will prove in Proposition \ref{pr:AT}, it can be seen that the
area function $A,$ where $A'(h)=T(h),$ satisfies the Hill's equation
\eqref{eq:hill-AT},
\[
A''(h)=\frac5{6(1-6h)h}A(h), \quad  A(0)=0,\, A'(0)=2\pi.
\]
From this second order differential equation it is easier to obtain
more terms of the Taylor's series of $T$ at zero. We notice that
this result is computationally simpler that our general approach but
only works for some special Hamiltonian systems.

The above initial value problem can be solved in terms of some
hypergeometric function $_2 F_1.$ It holds that
\[
A(h)=2\pi h\, _2F_1\Big(\frac16,\frac56;2; 6h\Big),
\]
where recall that for $|z|<1,$
\[
_2F_1(a,b;c;z)=\sum_{n=0}^\infty \frac{(a)_n(b)_n}{(c)_n} \frac
{z^n}{n!},
\]
$(d)_n=d(d+1)(d+2)\cdots(d+n-1)$ is the Pochhammer symbol and
$(d)_0=1.$ This expression helps to understand the appearance of all
prime numbers in the coefficients of $T$ because all  prime numbers
are of the form $6k+1$ or $6k+5$ and all these factors appear in the
numerators of $(1/6)_n(5/6)_n.$

    \section{Definitions and preliminary results.} \label{sec:2}

    This section reviews some definitions and prove some results that we
    will be use to prove Proposition \ref{pr:AT} and Theorem \ref{th:main2}.

\subsection{Limit cycles and Abelian integrals}\label{ss:whp}

    The second part of the Hilbert's
     16th problem asks about the maximum number of limit cycles and their relative
         locations in planar polynomial vector fields.  It is one of the most
    famous and difficult open problems in mathematics, see \cite{Ily2002,Sma1998}. At the end of the last century
    there has been a very significant advance when Ilyashenko and \'Ecalle independently proved the Dulac problem which
        is the case of individual finitude, that is, the number of limit cycles of a given
        planar polynomial differential system is finite.  We
        address for a very particular case of a weaker version of Hilbert's 16th problem, the so called infinitesimal Hilbert's  problem,
         which asks about an upper bound for the number of zeros of a particular Abelian
         integral.

    Let $ X_H = (- H_y, H_x) $ be the planar Hamiltonian vector field associated to \eqref{eq:ham} and consider a
    perturbation given by $ X _ {\varepsilon} = X_H + \varepsilon Y $, where
     $ Y = (P, Q) $ with $ P $ and $ Q $  polynomials.  The Poincar\'{e}-Pontryagin
     functions or Melnikov functions of  order $ k \in \N $ are obtained from  the coefficients
    of the displacement function of the first return Poincar\'{e} map as a Taylor's
    series in the $ \varepsilon $ variable near $ 0 $, that is, if $ P _ {\varepsilon} $ is the first return Poincar\'{e}
     map of the planar system $ X _ {\varepsilon} $ then its displacement function is given by $ \Delta _ {\varepsilon} (h) =
      P _ {\varepsilon} (h) - h $ and it has a Taylor's series in the $ \varepsilon $ variable near $ 0 $ given by
    \begin{equation*}
    \Delta_{\varepsilon}(h) = \varepsilon M_1(h) + \varepsilon^2 M_2(h) +\cdots+ \varepsilon^k M_k(h)+O(\varepsilon^{k+1}),
    \end{equation*}
    which converges to small values of $ \varepsilon $. Thus, when $M_1(h)\equiv M_2(h)\equiv M_{k-1}(h)\equiv0,$  the  Poincar\'{e}-Pontryagin functions
     or Melnikov functions of order $ k \in \N $ is given by $ M_k (h),  \; k \in \N $. The Poincar\'{e}-Pontryagin
     Theorem ensures that
    $$M_1(h) =  \displaystyle\int_{\gamma_h}Q(x,y) dx- P(x,y) dy,$$
    and that from each simple root of the
    $M_1$ bifurcates a single hyperbolic limit cycle.  Moreover,  if there is an $h^*$ such that
     $M_1(h^*)=M_1'(h^*)=\ldots=M_1^{(m-1)}(h^*)=0$ and $M_1^{(m)}(h^*) \neq 0$ we have at most $m$ limit cycles bifurcating from
     $\gamma_{h^*}.$
     So the total number of the limit cycles, counting the multiplicities,  bifurcating from a bounded continua of periodic orbits is at most
     the  number of isolated zeroes, taking into account their multiplicities, of the Abelian integral
      $M_1(h)$. This is the way how  isolated roots of Abelian integrals are related with
      the number of limit cycles of perturbed Hamiltonian systems.
      It is costumary to consider $I(h)=-M_1(h)$ as the first
      Poincar\'{e}-Pontryagin function, that is
     \begin{equation*}
    I(h)= \displaystyle\int_{\gamma_h} \omega=\displaystyle\int_{\gamma_h}  P (x, y) dy - Q (x, y) dx  =
    \iint_{\operatorname{Int}(\gamma_h)} \frac{\partial P(x,y)}{\partial x}+\frac{\partial Q(x,y)}{\partial y}\,dxdy,
    \end{equation*}
    where in the last equality we have used Green's Theorem.

      Hence in a few  words, the number of isolated zeros of $I (h)$, counted
with their multiplicities, gives
       an upper bound for the number limit cycles of $X_{\varepsilon}$  generated from the  ovals of $H$ near
       $\varepsilon=0.$ Moreover, if all these zeroes are simple this
       number of zeroes gives rise to the same number of hyperbolic
       limit cycles for the perturbed system.
       For more details see for instance  \cite{ChrLi2007,UrbHoss2017}.

    Notice that again from Green's Theorem we know that for any
    $i,j\in\N,$
    \begin{equation*}
    \displaystyle\int_{\gamma_h} x^i y^j dy  = \displaystyle\iint_{\operatorname{Int}(\gamma_h)} i x^{i-1} y ^j
     \,dx dy = -\displaystyle\int_{\gamma_h} \dfrac{i}{j+1} x^{i-1} y^{j+1} dx,
    \end{equation*}
    so, considering  $ P $ and $ Q $  polynomials, the function $I(h)$ can always be written as the linear combination
    \begin{equation}\label{espvetsistemaT}
    I(h)=\sum_{k=0}^\ell \beta_k J_k(h),\quad\mbox{where}\quad J_k(h)=\displaystyle\int_{\gamma_h} x^{i_k}
    y^{j_k}\,dx,
    \end{equation}
    for some $\ell\in\N,$ where  $i_k,j_k\in\N$ and  all $ \beta_k $ depend on the coefficients of $P$ and $Q.$

In view of expression \eqref{espvetsistemaT} it is natural to study
the number of zeros of linear combinations of $\ell+1$ functions. If
all these functions are linearly independent it is not difficult to
find linear combinations with exactly $\ell$ simple zeros in any
given interval. On the other hand, when the Hamiltonian $H$ is
polynomial,  many times some of the involved functions have some
linear or functional relations, like for instance, the so called
Picard-Fuchs equations that include also the derivatives of $I_k.$
Once all these relations are taken into account there appear some
other functions, say $\hat J_k(h),$ also involving Abelian
integrals, and maybe other elementary functions, such that
\[
I(h)=\sum_{k=0}^\ell \beta_k J_k(h)= \sum_{k=0}^p \alpha_k \hat
J_k(h),
\]
for some $\ell\ge p\in\N.$ Then, the simplest situation is when
these $p+1$ functions form a so called extended complete Chebyshev
system, see Section \ref{sec:2} for more information. This will be
the case for the system considered in this paper.

 In the literature there are many works dealing with zeros of Abelian integrals, see again
 \cite{ChrLi2007,UrbHoss2017}
 and their references. Without the aim of being exhaustive
 and for completeness we list some other techniques used elsewhere to approach the problem.
        For example, in some works (see \cite{DumFreLiCheZhaZif1997, HorIli1994, Peng2002}) there is a
         study the geometrical properties of the so-called centroid curve using the fact that it verifies a
          Riccati equation (which is itself deduced from a Picard-Fuchs system).  On the other hand in
          \cite{GasLiWeiLliZha2002, GauGavIli2009, Lubomir2001, LubomirIliya2002,Petrov1988}, the authors use complex analysis and
          algebraic topology (analytic continuation, argument principle, monodromy, Picard-Lefschetz formula,
          \ldots). Other times it is proved that the $p+1$
          functions are a Chebyshev system with some accuracy $k,$
          meaning that the maximum number of limit cycles
          provided by the Abelian integral is $p+k,$ see for
          instance \cite{GasLazTor2012}.

To end this subsection we state a simple but useful general result,
proved in \cite{ColGasPro}, that will be used in the proof of item
(i) of Theorem \ref{th:main2}.

\begin{lema}\label{le:li} Set $L\subset R$ an open real interval and let
$F_j:L\to\R,$ $j=0,1,\ldots,N,$ be $N+1$ linearly independent
analytic functions. Assume also that one of them, say $F_k, 0\le
k\le N,$  has constant sign on $L.$ Then, there exist real constants
$c_j,$ $j=0,1,\ldots,N,$ such that the linear combination
$\sum_{j=0}^N c_j F_j$ has at least $N$ simple zeroes in $L.$
\end{lema}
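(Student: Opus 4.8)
The plan is to reduce to the case where the sign-definite function is the constant $1$, produce a combination that changes sign $N$ times, and perturb it at the end to make all its zeroes simple. I would first use the hypothesis that $F_k$ has constant sign: multiplying $F_k$ by $-1$ if necessary, assume $F_k>0$ on $L$, and divide every function by it, setting $\widehat F_j := F_j/F_k$. Since $F_k$ is analytic and nowhere zero on $L$, each $\widehat F_j$ is analytic; the identity $\sum_j a_j F_j = F_k\sum_j a_j \widehat F_j$ shows that the $\widehat F_j$ remain linearly independent, and the zeroes of $\sum_j c_j\widehat F_j$ coincide with those of $\sum_j c_j F_j$ with the same multiplicities. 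Hence it suffices to treat the normalized problem in which $\widehat F_k\equiv 1$; in particular the constant function belongs to $\operatorname{span}\{\widehat F_0,\dots,\widehat F_N\}$.

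Next I would invoke the standard fact that $N+1$ linearly independent continuous functions admit $N+1$ points $t_0<t_1<\dots<t_N$ in $L$ for which the matrix $\big(\widehat F_j(t_m)\big)_{j,m}$ is nonsingular. Nonsingularity means that any prescribed values at the $t_m$ are realized by a (unique) combination, so I may choose $c_0,\dots,c_N$ with $G:=\sum_j c_j\widehat F_j$ satisfying $G(t_m)=(-1)^m$. Taking alternating signs at $N+1$ ordered points, $G$ changes sign on each of the $N$ intervals $(t_m,t_{m+1})$ and thus has at least $N$ zeroes in $L$; note moreover that $G$ is non-constant, since a constant cannot alternate in sign.

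To upgrade these to simple zeroes I would perturb $G$ by a small constant, which is legitimate precisely because the constant function lies in the span. Since $G$ is analytic and non-constant, $G'$ is not identically zero, so its zeroes are isolated and the set of critical values of $G$ is countable, hence of measure zero. Choosing a regular value $-\lambda$ of $G$ with $|\lambda|<1$, the function $G+\lambda$ has only simple zeroes, while $G(t_m)+\lambda$ still alternates in sign; therefore $G+\lambda$ retains at least $N$ sign changes and has at least $N$ simple zeroes in $L$. Undoing the normalization, that is, reading off the combination $\sum_j c_j F_j$ corresponding to $F_k(G+\lambda)$, produces the desired $\sum_{j} c_j F_j$ with at least $N$ simple zeroes.

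The \textbf{main obstacle} is the passage from ``$N$ zeroes'' to ``$N$ \emph{simple} zeroes'': a sign change only guarantees a zero of odd multiplicity, and one must make every zero transversal without destroying any of the $N$ guaranteed crossings. The constant-sign hypothesis is exactly what dissolves this difficulty, since after normalization it places the constant function in the available span and lets the small-constant perturbation simultaneously preserve the alternating sign pattern at the $t_m$ and, by analyticity of $G$, force all zeroes to be simple.
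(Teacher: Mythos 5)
Your proposal cannot be compared line-by-line with an in-paper argument, because the paper does not actually prove Lemma \ref{le:li}: it only states it and defers the proof to the reference \cite{ColGasPro}. Judged on its own terms, your proof is correct and self-contained. The normalization $\widehat F_j=F_j/F_k$ is legitimate (analyticity, linear independence, and zeroes with their multiplicities are preserved because $F_k$ is analytic and nonvanishing), and it is exactly the step that makes the constant-sign hypothesis work: afterwards the constant function lies in the span, so the whole family $G+\lambda$ consists of admissible combinations. The two remaining ingredients are standard and correctly used: linearly independent functions always admit points $t_0<\cdots<t_N$ with $\det\big(\widehat F_j(t_m)\big)\neq 0$ (proved by induction on $N$, reordering the points at the end), which lets you interpolate the alternating data $(-1)^m$ and force $N$ sign changes; and a non-constant analytic $G$ on an interval has at most countably many critical values, so almost every $\lambda$ with $|\lambda|<1$ makes all zeroes of $G+\lambda$ simple while preserving the alternation at the $t_m$. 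Multiplying back by $F_k$ keeps both the location and the simplicity of the zeroes, since at a zero of $G+\lambda$ one has $\big(F_k(G+\lambda)\big)'=F_k(G+\lambda)'\neq 0$. The only degenerate case is $N=0$, where $G\equiv 1$ is constant and the regular-value step does not apply, but there the statement is vacuous. In short, you have produced an independent proof of a result the authors import from the literature; your mechanism (alternating values at points of a nonsingular evaluation matrix, then a shift by a regular value) isolates cleanly why the constant-sign hypothesis is needed, namely to provide a one-parameter deformation inside the span that removes non-transversal zeroes without destroying the sign changes.
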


\subsection{Relations among Abelian integrals}\label{ss:raai}

This subsection is devoted to find relations among the integrals
\begin{equation}\label{eq:int}
I_k(h)=\int_{\gamma_h} x^k y\,dx, \quad k=0,1,2,\ldots,
\end{equation}
where $\gamma_h,$ for $h\in (0,1/6),$ are the ovals of
$\{H(x,y)=x^2/2+x^3/3+y^2/2=h\}$ and their derivatives. In
particular we obtain the Picard-Fuchs equations satisfied by $I_0$
and $I_1$ and the Hill's equation satisfied by $I_0.$ All our
computations are rather standard and we do not give all the details,
see for instance \cite{ChrLi2007}.

\begin{lema}
Consider the Abelian
integrals defined in \eqref{eq:int}. Then,
\begin{enumerate}[(i)]
\item For all $1\le k\in\N,$
\begin{equation}\label{eqpropquerelacionaIkcomI1}
    \displaystyle\sum_{j=0}^{k} {k\choose j}
         3^j2^{k-j} ( 3k - j) I_{3k- j-1
        }(h)=0.
        \end{equation}
In particular,
\[
I_2=-I_1,\quad  I_5=-\frac{3I_3+5I_4}2,\quad
I_8=-\frac{9I_5+21I_6+16I_7}4.
\]
\item   For $3\leq k\in\N$,
\begin{equation}\label{eq:k}
(2k+5)I_k(h) +3(k+1) I_{k-1}(h) - 6(k-2)hI_{k-3}(h)=0.
\end{equation}
In particular,
\begin{equation}\label{eq:3-4}
I_3=\frac{-12I_2+6hI_0}{11},\quad I_4=\frac{-15I_3+12hI_1}{13}.
\end{equation}
\end{enumerate}

\end{lema}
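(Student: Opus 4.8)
The plan is to derive both families of relations from the single principle that the integral of an exact $1$-form around the closed oval $\gamma_h$ vanishes, combined with the defining relation $y^2 = 2h - x^2 - \tfrac23 x^3$ that holds on $\gamma_h$ (together with its differential $y\,dy = -(x+x^2)\,dx$, obtained from $H=h$).

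For part (i), the key is to rewrite the defining relation as $x^2(2x+3) = 6h - 3y^2$ on $\gamma_h$. I then set $P(x) := x^{2k}(2x+3)^k$, for which the binomial theorem gives $P(x) = \sum_{j=0}^k \binom{k}{j}3^j 2^{k-j} x^{3k-j}$, and hence $P'(x) = \sum_{j=0}^k \binom{k}{j}3^j 2^{k-j}(3k-j)x^{3k-j-1}$. Consequently $\int_{\gamma_h} P'(x)\,y\,dx$ is precisely the left-hand side of \eqref{eqpropquerelacionaIkcomI1}. Since $d(P(x)\,y)=P'(x)y\,dx+P(x)\,dy$ is exact, integrating around $\gamma_h$ yields $\int_{\gamma_h}P'(x)y\,dx=-\int_{\gamma_h}P(x)\,dy$. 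The crucial observation is that on $\gamma_h$ one has $P(x)=x^{2k}(2x+3)^k=(6h-3y^2)^k=3^k(2h-y^2)^k$, a polynomial in $y$ alone; therefore $P(x)\,dy$ is an exact $1$-form in $y$ and its integral around the closed curve vanishes. This immediately gives the identity.

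For part (ii), I would integrate the exact form $d(x^{k-2}y^3)$ around $\gamma_h$, obtaining $0=(k-2)\int_{\gamma_h}x^{k-3}y^3\,dx+3\int_{\gamma_h}x^{k-2}y^2\,dy$. In the first term I substitute $y^3=y(2h-x^2-\tfrac23 x^3)$ to get $\int_{\gamma_h}x^{k-3}y^3\,dx=2h\,I_{k-3}-I_{k-1}-\tfrac23 I_k$; in the second I use $y\,dy=-(x+x^2)\,dx$ to get $\int_{\gamma_h}x^{k-2}y^2\,dy=-\int_{\gamma_h}x^{k-2}y(x+x^2)\,dx=-(I_{k-1}+I_k)$. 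Substituting and collecting the coefficients of $I_k$, $I_{k-1}$ and $hI_{k-3}$ gives $-\tfrac{2k+5}{3}I_k-(k+1)I_{k-1}+2(k-2)hI_{k-3}=0$, and multiplying by $-3$ produces \eqref{eq:k} exactly.

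The only genuinely creative steps, and hence the main obstacle, are spotting the algebraic identity $x^2(2x+3)=6h-3y^2$ that collapses $P(x)$ into a pure power of $y$ (so that $\int_{\gamma_h}P\,dy=0$ is immediate, with no further reduction needed), and choosing the right primitives $x^{k-2}y^3$ and $P(x)y$ whose exact differentials reproduce precisely the claimed index patterns. Everything after that is routine integration by parts and bookkeeping: the listed special cases $I_2=-I_1$, \eqref{eq:3-4}, and the remaining relations follow by setting $k=1,2,3,\dots$ in \eqref{eqpropquerelacionaIkcomI1} and \eqref{eq:k}.
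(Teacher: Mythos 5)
Your proposal is correct and is essentially the paper's own proof: part (i) is the paper's argument read in reverse (the paper starts from $\int_{\gamma_h}(6h-3y^2)^k\,dy=0$, expands $(3x^2+2x^3)^k$ binomially, and applies Green's theorem, which is exactly your exact-form computation with $P(x)y$), and part (ii) uses the identical three ingredients --- exactness of $d(x^{k-2}y^3)$, the relation $y\,dy=-(x+x^2)\,dx$, and the substitution $y^2=2h-x^2-\tfrac23x^3$ --- merely applied in a slightly different order, yielding the same coefficient bookkeeping.
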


\begin{proof}  (i)  Since $3x^2+3y^2+2x^3=6h,$ for any $k\ge1,$ we have
\begin{align*}
0=&\int_{\gamma_h} \big(6h-3y^2\big)^k\, dy=\int_{\gamma_h}
\big(3x^2+2x^3\big)^k\, dy= \sum_{j=0}^k {k\choose j} 3^j 2^{k-j}
\int_{\gamma_h}x^{3k-j}\,dy\\=& -\sum_{j=0}^k {k\choose j} 3^j
2^{k-j}(3k-j)\int_{\gamma_h}x^{3k-j-1} y\,dx= -\sum_{j=0}^k
{k\choose j} 3^j 2^{k-j}(3k-j)I_{3k-j-1}(h),
\end{align*}
where we have used Green's Theorem.

            \item[(ii)]  From $x^2/2+x^3/3+y^2/2=h$ we know that on
            $\gamma_h,$ $(x+x^2)dx+ydy=0.$ Hence, multiplying this
            equality for $x^{k-2}y,$  integrating and using again Green's Theorem, we obtain that
            \begin{align*}
0= &\int_{\gamma_h} \Big(\big(x^{k-1}+x^{k}\big)y\,dx+
\int_{\gamma_h} x^{k-2}y^2\,dy= I_{k-1}(h)+I_k(h)+ \int_{\gamma_h}
x^{k-2}y^2\,dy\\=& I_{k-1}(h)+I_k(h)-\frac{k-2}3 \int_{\gamma_h}
x^{k-3}y^3\,dx\\=& I_{k-1}(h)+I_k(h)-\frac{k-2}3 \int_{\gamma_h}
x^{k-3}\Big(2h-x^2-\frac23x^3 \Big)y\,dx\\=&
I_{k-1}(h)+I_k(h)-\frac{k-2}3\Big(2hI_{k-3}(h)-I_{k-1}(h)-\frac23I_k(h)
\Big)\\=&\frac{2k+5}9I_k(h)+\frac{k+1}3I_{k-1}(h)-\frac{2(k-2)}3 h
I_{k-3}(h).
\end{align*}
From the above equality the result follows.

\end{proof}

    \medskip

Next result gives a relation between the Abelian integrals and their
derivatives.

 \begin{lema}\label{propquerelacionaIkcomI0}
        Considering the  Abelian integrals defined in
        \eqref{eq:int}. Then:
        \begin{enumerate}[(i)]
            \item  It holds that $I_0(h)=-A(h)$ and $I_0'(h)=-T(h),$
             where $T$ is the period function associated to $H$ and
             $A(h)$ is the areal surrounded by the oval $\gamma_h.$

            \item  For $0\leq k\in\N$,
            \begin{equation}\label{eq:deriv}
            2I_{k+3}'(h) + 3I_{k+2}'(h) +
            3I_k(h)-6h I_k'(h)=0.
            \end{equation}

        \end{enumerate}
    \end{lema}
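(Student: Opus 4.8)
The plan is to prove Lemma \ref{propquerelacionaIkcomI0} in two parts, treating item (i) first since it connects directly to the period and area functions established earlier, and then item (ii) via a differentiation-under-the-integral argument combined with the algebraic relation on $\gamma_h$.

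For item (i), I would start from the definition $I_0(h)=\int_{\gamma_h} y\,dx$. Orienting $\gamma_h$ appropriately, Green's Theorem gives $\int_{\gamma_h} y\,dx = -\iint_{\operatorname{Int}(\gamma_h)} dx\,dy = -A(h)$, which is exactly the (signed) area enclosed by the oval; this yields $I_0(h)=-A(h)$. Then, since item (i) of Theorem \ref{th:main1} establishes that $A'(h)=T(h)$, differentiating gives $I_0'(h)=-A'(h)=-T(h)$, as claimed. This part is essentially immediate once the Green's Theorem computation and the earlier $A'=T$ identity are invoked.

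The substance is in item (ii). The plan is to differentiate the integrals $I_k(h)=\int_{\gamma_h} x^k y\,dx$ with respect to $h$. The standard technique is to use the fact that on $\{H=h\}$ one has $y=\pm\sqrt{2h-x^2-\tfrac23 x^3}$, so that differentiating under the integral sign in $h$ produces factors of $1/y$; more precisely, $\tfrac{d}{dh}\int_{\gamma_h} x^k y\,dx = \int_{\gamma_h} x^k \tfrac{1}{y}\,dx$ (up to the correct constant from $\partial y/\partial h = 1/y$ on the level set). The difficulty is that these $1/y$ integrals are not themselves of the form $I_j$, so one must reduce them back to the $I_j$ family. The key algebraic input is the relation $2h-x^2-\tfrac23 x^3 = y^2$ on $\gamma_h$, which lets one write $1 = y^2/(2h-x^2-\tfrac23 x^3)$ and multiply numerators and denominators to convert $\int x^k/y\,dx$ expressions into combinations of $I_j$. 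I would look for the specific combination of $I_{k+3}', I_{k+2}', I_k, I_k'$ that makes all the awkward $1/y$ and higher-power terms cancel; the coefficients $2,3,3,-6h$ in \eqref{eq:deriv} are presumably dictated precisely by forcing this cancellation, mirroring the structure of the coefficient $2k+5,\,3(k+1),\,-6(k-2)h$ appearing in \eqref{eq:k}.

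An alternative and perhaps cleaner route, which I expect to be the main line of the actual proof, is to differentiate the already-established algebraic relation \eqref{eq:k} with respect to $h$ and combine it with the relations in the first lemma, thereby avoiding direct differentiation under the integral altogether. Concretely, \eqref{eq:k} relates $I_k, I_{k-1}, hI_{k-3}$; differentiating gives a relation among $I_k', I_{k-1}', I_{k-3}, hI_{k-3}'$, and after reindexing $k\mapsto k+3$ and using the earlier index-lowering identities \eqref{eqpropquerelacionaIkcomI1} to eliminate the unwanted terms, one should be able to isolate \eqref{eq:deriv}. The main obstacle in either approach is bookkeeping: ensuring that every boundary term from integration by parts vanishes because $\gamma_h$ is a closed oval, and correctly tracking the $1/y$ contributions so that the final combination is genuinely free of non-$I_j$ integrals. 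I would expect the verification that the chosen linear combination eliminates all residual terms to be the only genuinely delicate step, with the rest being routine manipulation of Green's Theorem and the level-set constraint.
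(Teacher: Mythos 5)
Your item (i) is exactly the paper's argument: Green's Theorem gives $I_0=-A$, and $A'=T$ from Theorem \ref{th:main1} then gives $I_0'=-T$. Your first route for item (ii) is also, in substance, the paper's proof, but it is far more immediate than your description suggests. The paper makes the differentiation step precise via the Gelfand--Leray formula: taking $\omega=x^ky\,dx$ and $\eta=x^k/y\,dx$ one checks $d\omega=dH\wedge\eta$, so $I_k'(h)=\int_{\gamma_h}x^k/y\,dx$ exactly (the constant you hedge about is $1$, since $\partial y/\partial h=1/y$ on the level set). After that there is no cancellation to engineer: multiplying by $6h$ and substituting $6h=2x^3+3x^2+3y^2$ on $\gamma_h$ gives
\[
6h\,I_k'(h)=\int_{\gamma_h}\bigl(2x^3+3x^2+3y^2\bigr)\frac{x^k}{y}\,dx
=2I_{k+3}'(h)+3I_{k+2}'(h)+3I_k(h),
\]
which is \eqref{eq:deriv} in one line (the $3y^2$ term turns $x^k/y$ back into $x^ky$, hence $3I_k$).

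The route you single out as the expected ``main line'' --- differentiating \eqref{eq:k} and eliminating terms via \eqref{eqpropquerelacionaIkcomI1} --- cannot work, and it is worth seeing why. Differentiating \eqref{eq:k} with $k$ replaced by $k+3$ gives $(2k+11)I_{k+3}'+3(k+4)I_{k+2}'-6(k+1)I_k-6(k+1)hI_k'=0$, and subtracting $(k+1)$ times the target \eqref{eq:deriv} leaves the residual identity $I_{k+3}'+I_{k+2}'=(k+1)I_k$, which is not a consequence of the algebraic relations. More decisively, no formal manipulation of \eqref{eq:k}, \eqref{eqpropquerelacionaIkcomI1} and their $h$-derivatives can ever yield \eqref{eq:deriv}: for any non-constant smooth $c$, the rescaled family $\tilde I_j(h)=c(h)I_j(h)$ still satisfies \eqref{eq:k} and \eqref{eqpropquerelacionaIkcomI1} (both are linear with coefficients in $\mathbb{R}[h]$), hence also every derivative of these identities, yet it violates \eqref{eq:deriv}, whose left-hand side picks up the extra term $c'(h)\bigl(2I_{k+3}+3I_{k+2}-6hI_k\bigr)$; this bracket is not identically zero, since together with \eqref{eq:k} its vanishing would force $I_{k+2}\equiv 6hI_k$, already false for $k=0$ (there $I_2=-I_1$, and $I_1$ and $-6hI_0$ have different leading Taylor coefficients). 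So \eqref{eq:deriv} genuinely encodes how $\gamma_h$ varies with $h$, and the analytic input of differentiating under the integral sign is indispensable. The residual identity $I_{k+3}'+I_{k+2}'=(k+1)I_k$ is in fact true, but proving it requires that same input ($I_j'=\int_{\gamma_h}x^j/y\,dx$, the relation $x(1+x)\,dx=-y\,dy$ on $\gamma_h$, and Green's Theorem), so this route is circular at best.
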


    \begin{proof} Recall that the Gelfand-Leray formula, see for
    instance \cite[Thm. 26.32]{IlyYak2008}, allows to compute easily
    the derivative of Abelian integrals under suitable regularity
    conditions. It asserts that
    \[
\frac{d}{dh} \int_{\gamma_h} \omega =\int_{\gamma_h} \eta,
    \]
provided that $d\omega=dH\wedge \eta.$ In particular, for $0\le
k\in\N,$ by taking $\omega=x^ky\,dx$  and $\eta= x^k/y\, dx,$ since
$dH=(x+x^2)dx+ydy,$ it holds that $d\omega=dH\wedge \eta.$ Hence,
\begin{equation}\label{eq:der}
I_k'(h)=\frac{d}{dh} \int_{\gamma_h} x^ky\,dx =\int_{\gamma_h}
\frac{x^k}y\,dx.
\end{equation}

(i) By item (i) of Theorem \ref{th:main1} we know that $A'(h)=T(h),$
where $A(h)$ is the area surrounded by $\gamma_h.$ Hence, by Green's
Theorem,
\[
A(h)= \iint_{\operatorname{Int}(\gamma_h)} dx\,dy= -\int_{\gamma_h}
y\,dx=-I_0(h).
\]
Therefore,
\[
T(h)=A'(h)=-I_0'(h)=-\int_{\gamma_h} \frac{1}y\,dx,
\]
where we have used \eqref{eq:der} for $k=0.$ In fact, for this
particular Hamiltonian, the above relation simply follows by using
the first differential equation of the Hamiltonian system,
$dx/dt=-y.$

(ii)    By using \eqref{eq:der} and the expression of $H(x,y)=h$  it
holds that
            \begin{equation*}
            6h I_k'(h)= \displaystyle\int_{\gamma_h}  \left( 2x^3+3{x^2} + 3{y^2} \right)\dfrac{x^k}{y} \;
             dx = 2I_{k+3}'(h) + 3I_{k+2}'(h) + 3I_{k}(h),
            \end{equation*}
as desired.
\end{proof}

\begin{prop}\label{pr:pf}  Considering the  Abelian integrals defined in
        \eqref{eq:int}. Then
        \begin{enumerate}[(i)]
\item  The functions $I_0$ and $I_1$ verify the  $2\times2$ Picard-Fuchs
equations
\begin{equation}\label{eq:pf}
6(6h-1)h \left(\begin{array}{c}
 I_0'(h)  \\
I_1'(h)
\end{array}\right)=\left(
\begin{array}{cc}
6(5h-1) & -7\\
6h & 42h\\
\end{array}
\right)\left(
         \begin{array}{c}
           I_0(h)\\
           I_1(h)\\
         \end{array}
       \right).
\end{equation}

\item The function $I_0$ verifies the Hill's equation
\begin{equation*}
I_0''(h)=\frac5{6(1-6h)h} I_0(h).
\end{equation*}

\item The function $p(h)=I_0(h)/I_1(h)$ satisfies the Riccati
differential equation
\begin{equation*}
p'(h)=\frac1{6(6h-1)h}\left(7p^2(h)+6(2h+1)p(h)+6h\right).
\end{equation*}

\item It holds that $I_2(h)=-I_1(h)$ and, for all $1\le n\in\N,$
\begin{align*}
I_{3n}(h)=&h p_{n-1}(h)I_0(h)+q_{n-1}(h) I_1(h),\\
I_{3n+1}(h)=&h r_{n-1}(h)I_0(h)+s_{n}(h) I_1(h),\\
I_{3n+2}(h)=&h u_{n-1}(h)I_0(h)+v_{n}(h) I_1(h),
\end{align*}
where $w\in\{p,q,r,s,u,v\},$ and $w_k\in\mathbb{Q}(h)$ denotes a
polynomial of degree $k.$ Moreover, $I_{3n+2}$ is a linear
combination of several $I_j$ with $j<3n+2$  and $j\not\equiv2 \pmod
3.$
\end{enumerate}
\end{prop}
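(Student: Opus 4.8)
The plan is to treat the four items sequentially, with the first providing the engine for everything that follows. For item (i), I would combine the two families of relations already established. From the differentiation relation \eqref{eq:deriv} with $k=0$, namely $2I_3'+3I_2'+3I_0-6hI_0'=0$, together with the algebraic relation $I_2=-I_1$ (the case $k=2$ of \eqref{eqpropquerelacionaIkcomI1}) and the relation \eqref{eq:3-4} expressing $I_3$ in terms of $I_0,I_1$, one obtains a first scalar equation linking $I_0',I_1'$ to $I_0,I_1$. To close the system I would produce a second such equation: differentiating $I_2=-I_1$ gives $I_2'=-I_1'$, and feeding this plus $I_2=-I_1$ into \eqref{eq:deriv} with an appropriate index (or directly differentiating the relation \eqref{eq:3-4} for $I_3$ and substituting) yields the companion equation. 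Solving the resulting linear system for $(I_0',I_1')$ and clearing the common denominator should reproduce the matrix form \eqref{eq:pf}. The bookkeeping here is the only delicate part, since one must consistently eliminate $I_2,I_3$ and their derivatives so that only $I_0,I_1$ and their first derivatives survive.

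Items (ii) and (iii) are then formal consequences of (i). For the Hill equation, I would differentiate the first row of \eqref{eq:pf}, which expresses $6(6h-1)h\,I_0'$ as a linear combination of $I_0,I_1$, and then substitute back $I_1$ and $I_1'$ using both rows of \eqref{eq:pf} to eliminate every occurrence of $I_1$. Since the $2\times 2$ system has $I_0$ and $I_1$ as a basis of solutions, the second-order equation for $I_0$ alone is determined, and the claim is precisely that the $I_1$-coefficient cancels to leave $I_0''=\frac{5}{6(1-6h)h}I_0$. For item (iii), setting $p=I_0/I_1$ and differentiating gives $p'=(I_0'I_1-I_0I_1')/I_1^2$; substituting the two expressions for $I_0',I_1'$ from \eqref{eq:pf} and dividing through by $I_1^2$ turns every term into a polynomial in $p$, and one checks that the coefficients match $\frac{1}{6(6h-1)h}(7p^2+6(2h+1)p+6h)$.

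Item (iv) is the part requiring the most care, and I expect it to be the main obstacle. The statement is an assertion about \emph{every} $I_{3n+j}$, so it is naturally proved by induction on $n$. The base case $I_2=-I_1$ is already in hand, and the representations for $n=1$ follow from \eqref{eq:3-4} and the defining relation for $I_5$. For the inductive step I would use the three-term recurrence \eqref{eq:k} to express $I_k$ in terms of $I_{k-1}$ and $hI_{k-3}$, together with the reduction \eqref{eqpropquerelacionaIkcomI1} to handle the indices $\equiv 2\pmod 3$. The genuine difficulty is the \emph{degree tracking}: one must verify that at each step the rational coefficients in $\mathbb{Q}(h)$ remain polynomials of exactly the claimed degrees $k$, $n-1$, or $n$, and that the factor $h$ appears in front of the $I_0$-term. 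This amounts to showing that the recurrence \eqref{eq:k}, which raises the index by one and multiplies $I_{k-3}$ by $h$, increments the polynomial degrees by the predicted amount modulo the residue class of $k$; I would organize this as three coupled inductions, one for each residue class. The final clause, that $I_{3n+2}$ is a linear combination of lower-index $I_j$ with $j\not\equiv 2\pmod 3$, should follow by combining \eqref{eqpropquerelacionaIkcomI1} (which expresses an index $\equiv 2$ in terms of strictly smaller indices) with the inductive representations already obtained for those smaller indices in the two admissible residue classes.
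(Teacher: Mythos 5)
Your proposal follows essentially the same route as the paper's proof: item (i) is obtained exactly as there, by combining \eqref{eq:deriv} (for two consecutive indices) with $I_2=-I_1$ and \eqref{eq:3-4} and solving the resulting linear system for $(I_0',I_1')$; items (ii)--(iii) are the same eliminations from \eqref{eq:pf}; and item (iv) is the same induction on the blocks $I_{3n},I_{3n+1},I_{3n+2}$ using \eqref{eq:k}, with \eqref{eqpropquerelacionaIkcomI1} yielding the final clause about $I_{3n+2}$. The only trivial slip is that $I_2=-I_1$ is the case $k=1$, not $k=2$, of \eqref{eqpropquerelacionaIkcomI1}; otherwise the argument, including your degree tracking in the induction, matches the paper's.
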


\begin{proof} (i) By using that $I_2=-I_1,$ together with
\eqref{eq:3-4}, expression \eqref{eq:deriv} for $k=0$ and $k=1,$
write as
\begin{align*}
&6hI_0'(h)+I_1'(h)-5I_0(h)=0,\\
&6hI_0'(h)+6(2-11h)I_1'(h)+6I_0(h)+77I_1(h)=0.
\end{align*}
Solving this system with respect to $I_0'$ and $I_1'$ gives the
result.

(ii) From the first equation of \eqref{eq:pf} we get that
\begin{equation}\label{eq:I1}
I_1(h)=\frac{6(5h-1)I_0(h)-6(6h-1)h I_0'(h)}{7}.
\end{equation}
By plugging this expression in the second equation of \eqref{eq:pf}
we arrive to the Hill's equation.

(iii) It is a direct consequence of item (i).

(iv) The result follows by using induction on $n$ and equalities
\eqref{eq:k} taking in each step blocks of three integrals
$I_{3n},I_{3n+1}$ and $I_{3n+2}.$  For instance,
\begin{align*}
I_3(h)=&\frac{6}{11}hI_0(h)+\frac{12}{11} I_1(h), \\
 I_4(h)=&-\frac{90}{143}hI_0(h)+\Big(\frac{12}{13}h-\frac{180}{143}\Big)
 I_1(h),\\
 I_5(h)=&\frac{108}{143}hI_0(h)+\Big(-\frac{30}{13}h+\frac{216}{143}\Big)
 I_1(h).
\end{align*}
The final property for $I_{3n+2}$ is a simple consequence of
relation \eqref{eqpropquerelacionaIkcomI1}.
\end{proof}

\begin{proof}[Proof of Proposition \ref{pr:AT}]
Notice that this proposition simply restates Proposition \ref{pr:pf}
but changing $I_0(h)$ and $I_1(h),$ by the two functions  $A(h)$ and
$T(h).$ In fact, it suffices to use the results of Lemma
\ref{propquerelacionaIkcomI0},  $I_0(h)=-A(h)$ and $I_0'(h)=-T(h),$
and that the expression \eqref{eq:I1} reads as
\begin{equation}\label{eq:quo}I_1(h)=\frac 6 7(1-5h)A(h)+\frac 6 7(6h-1)h T(h).\end{equation} Then
all the results are simple computations.
\end{proof}

\subsection{Involutions and rational
parameterizations}\label{ss:param}

Let $ A $ be a smooth function with a minimum at $ x = 0.$ Then, it
has associated an involution $\sigma$ defined on some open interval
$\mathcal{K}= (x_l, x_r) \ni 0$ that satisfies $A(x)=A(\sigma(x)).$
Recall that a map $\sigma$ is called {\it an involution} if
  $\sigma \circ \sigma = \operatorname{Id}$ and $\sigma \neq
  \operatorname{Id}.$ By the results of \cite{GraManVil2011}
  (see Theorem \ref{criteriointabeliana}) this involution plays an important role when
  studying some Abelian integrals associated to the Hamiltonian
  $H(x,y)=A(x)+y^2/2.$ In our case $A(x)=x^2/2+x^3/3$ and hence
  $z=\sigma(x)$ is defined implicitly by
  \begin{equation}\label{eq:W}
\frac{x^2}2+\frac{x^3}3-\frac{z^2}2+\frac{z^3}3=(x-z)S(x,y)=0,\quad
\mbox{where}\quad S(x,y)= \frac {x+z} 2 +\frac{x^2+xz+z^2}3.
  \end{equation}
Solving $S(x,z)=0$  we get
\begin{equation}\label{eq:ivo1}
    z=Z^\pm(x)=\dfrac{1}{4} \left(-3 - 2 x \pm \sqrt{3(3 - 4 x - 4 x^2)}
     \right).
    \end{equation}
Then $\sigma=Z^+$ and $\mathcal{K}=(-1,1/2).$

 As we will see, when one wants to apply
Theorem \ref{criteriointabeliana} we need to control the sign of
functions of the form $R(x,\sigma(x)),$ where $R\in\R(x,y)$ is a
polynomial. In this situation it is very useful to introduce the so
called  rational parameterizations of algebraic curves. Given a
planar algebraic curve $R(x,y)=0,$ it is said that admits a rational
parameterization if there exist two non-constant rational functions
$u(s)$ and $v(s), s\in\R,$ such that $R(u(s),v(s))\equiv0.$
Cayley-Riemann's Theorem (\cite{Abh1988,AbhBaj1988}) ensures that
$R$ can be rationally parameterized if and only if its genus is
zero. Moreover, in such case there are effective methods to find a
parameterization, see for instance  \cite[Chap. 4\&5]{Sen2008}. In
particular, when $S$ is an irreducible quadratic polynomial, it has
genus 0 and it can be rationally parameterized. Next lemma gives one
of its parameterizations and other useful expressions for our
forthcoming computations. Its proof is straightforward.

\begin{lema}
 A rational parameterization of the algebraic curve
$S(x,z)=0$ with $S$ given in \eqref{eq:W} is
\begin{equation*}
x=u(s)=\frac{-3s(s-2)}{2(s^2-2s+4)},\quad
z=v(s)=\frac{-3s}{s^2-2s+4},
\end{equation*}
and $u$ and $v$ map bijectively $[0,1]$ into $[0,1/2]$ and $[-1,0],$
respectively. Moreover
\[
\sigma(u(s))=Z^+(u(s))=v(s)\quad \mbox{and}\quad
Z^-(u(s))=\frac{3(s-2)}{s^2-2s+4}
\]
\end{lema}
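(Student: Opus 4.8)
The plan is to verify the four assertions by direct computation, exploiting the algebraic relation $x=\tfrac{s-2}{2}\,z$ that is already visible in the two given formulas. Write $w(s):=s^2-2s+4=(s-1)^2+3$, so that $w(s)>0$ for every real $s$, and note $v(s)=-3s/w(s)$ together with $u(s)=\tfrac{s-2}{2}\,v(s)$. First I would substitute $x=u(s)$, $z=v(s)$ into $S$ from \eqref{eq:W}. Clearing denominators turns $S(x,z)=0$ into the polynomial identity $3(x+z)=-2(x^2+xz+z^2)$; using $x+z=\tfrac{s}{2}z$ and $x^2+xz+z^2=\tfrac{w(s)}{4}z^2$, and then $z=-3s/w(s)$, both sides collapse to $-9s^2/(2w(s))$. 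This is a one-line check and shows that $(u(s),v(s))$ lies on the curve $S=0$, so the pair is a rational parameterization.

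Next I would establish the claimed bijections by a quotient-rule computation of the derivatives, which gives
\[
v'(s)=\frac{3(s^2-4)}{w(s)^2},\qquad u'(s)=\frac{-12(s-1)}{w(s)^2}.
\]
On $[0,1]$ we have $s^2-4<0$ and $s-1\le 0$, hence $v'<0$ and $u'\ge 0$ there, so $v$ is strictly decreasing and $u$ strictly increasing. Since $u(0)=v(0)=0$, $u(1)=1/2$ and $v(1)=-1$, the monotonicities yield the stated bijections of $[0,1]$ onto $[0,1/2]$ and onto $[-1,0]$, respectively.

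For the last assertion I would solve $S(x,z)=0$ as a quadratic in $z$: multiplying \eqref{eq:W} by $6$ gives $2z^2+(2x+3)z+(2x^2+3x)=0$, whose discriminant is $3(3-4x-4x^2)$, recovering the two roots $Z^\pm(x)$ of \eqref{eq:ivo1}. It then remains to match the two explicit expressions $v(s)$ and $\tfrac{3(s-2)}{w(s)}$ to the labels $Z^+$ and $Z^-$ at $x=u(s)$. I would check that their sum $v(s)+\tfrac{3(s-2)}{w(s)}=-6/w(s)$ equals the sum of roots $-(2u(s)+3)/2=-6/w(s)$, which confirms that the two expressions are precisely $\{Z^+(u(s)),Z^-(u(s))\}$. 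The only step that is not pure algebra is deciding which is which, and this is the mild point of the argument: since $Z^+-Z^-=\tfrac12\sqrt{3(3-4x-4x^2)}\ge 0$, the assignment is settled by the sign of $v(s)-\tfrac{3(s-2)}{w(s)}=\tfrac{6(1-s)}{w(s)}\ge 0$ on $[0,1]$, which forces $\sigma(u(s))=Z^+(u(s))=v(s)$ and $Z^-(u(s))=\tfrac{3(s-2)}{s^2-2s+4}$. All remaining computations are elementary verifications, consistent with the claim that the proof is straightforward.
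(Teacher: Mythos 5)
Your proposal is correct: the substitution check, the monotonicity computation via $u'(s)=-12(s-1)/w(s)^2$ and $v'(s)=3(s^2-4)/w(s)^2$, and the root-matching argument (using that $v(s)$ is already known to be a root, that the sums agree by Vieta, and that the sign of $v(s)-3(s-2)/w(s)=6(1-s)/w(s)\ge 0$ settles which root is $Z^+$) are all accurate. The paper gives no proof at all, stating only that it is straightforward, and your direct verification is precisely the computation the authors intend, so the approaches coincide.
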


Let us illustrate how to use it and its advantages with respect
other approaches in a simple example that will be used later. Assume
that we want to prove that the function
\begin{equation}\label{eq:exx}
M(x)=1+x+\sigma(x),\quad x\in(0,1/2) \end{equation}
 does not vanish.

A first naive way consists in trying to find its solutions, plugging
the expression of $\sigma=Z^+,$ given in \eqref{eq:ivo1}. Then,
isolating the square root term and squaring in both sides we obtain
the polynomial equation $2x^2+2x-1=0$ that has the root $x_0=(\sqrt
3-1)/2$ in $(0,1/2),$ that in fact is not a solution of
\eqref{eq:exx}. So, this approach fails unless we discard this
spurious solution.

A second powerful approach consists on using resultants, see
\cite{Stu2002}. An advantage  is that it can be utilized for any
involution $z=\sigma(x)$ defined implicitly by a polynomial relation
$S(x,z)=0.$ This is the method used systematically in
\cite{GraManVil2011,ManVil2002}. In this case it reduces to prove
that the following resultant
\[
U(x)=\operatorname{Res}_z\big({\overline M}(x,z),S(x,z) \big),
\]
where ${\overline M}(x,z)=1+x+z,$ does not vanish for $x\in(0,1/2).$
Of course, for this simple ${\overline M}$ there is no need of doing
the resultant because ${\overline M}(x,z)=0$ is equivalent to
$z=-1-x,$ but for higher degree functions this general approach can
always be used. In this case $U(x)=(2x^2+2x-1)/6,$ and as in the
previous approach $U(x_0)=0$ and, as a consequence, we cannot assure
that $M(x)$ does not vanish on $(0,1/2).$

Finally, with the approach that we propose, we can prove our goal.
The only disadvantage is that it only works when $S(x,z)=0$ has
genus 0, but fortunately, this is the situation in the case we are
dealing with. Notice that to prove that $M(x)$ does not vanish on
$(0,1/2)$ it suffices to prove that
\[
{M}\big(u(s)\big)=\overline{M}\big(u(s),
v(s)\big)=1+u(s)+v(s)=-\frac{s^2+4s-8}{2(s^2-2s+4)}
\]
does not vanish for $s\in(0,1),$ result that trivially holds.

In fact, the reason why this third approach works in this case,
while the two previous ones do not, is simple. The first two
approaches consider simultaneous  the other branch $z=Z^-(x)$
defined  by $S(x,z)=0$ and this branch is not taken into account in
the third one. In fact,
\[
\overline{M}\big(u(s), Z^-(u(s)\big)=-\frac{s^2-8s+4}{2(s^2-2s+4)},
\]
and this function vanishes at $s=s_0=4-2\sqrt3\in(0,1)$ and
$u(s_0)=x_0.$

The reader interested to see more utilities of the rational
parameterizations in dynamical systems can take a look to
\cite{GasLazTor2019}.

We will use either the second or the third methods when we study the
sign of functions $R(x,\sigma(x)).$ In fact, both approaches lead to
a final polynomial in one variable in $\mathbb{Q}(x),$ $x\in(0,1/2)$
or $\mathbb{Q}(s),$ $s\in(0,1).$ It is well known that the control
of the zeroes of these polynomials in the respective intervals can
be done by computing their Sturm sequences, see for instance
\cite{Sto2002}. We briefly recall this method that we will
systematically use without giving the details.

    \begin{defi}[Sturm's sequence]
        A sequence $(f_0,\ldots,f_m)$ of continuous real functions on $[a,b]$ is called a
          Sturm's sequence for $f=f_0$ on $[a,b]$ if the following holds:
        \begin{itemize}
            \item [(a)] $f_0$ is differentiable on $[a,b]$.
            \item [(b)] $f_m$ does not vanish on $[a,b]$.
            \item[(c)] if $f(x_0)= 0$, $x_0 \in [a,b]$, then $f_1(x_0) f_0'(x_0) >0$.
            \item[(d)] if $f_i(x_0)= 0$, $x_0 \in [a,b]$, then $f_{i+1}(x_0) f_{i-1}(x_0) <0$, $i \in \{1,\ldots,m\}$.
        \end{itemize}
    \end{defi}
    \begin{teo}[Sturm's Theorem]
        Let $(f_0,\ldots,f_m)$ be a Sturm's sequence for $f=f_0$ on $[a,b]$ with $f(a)f(b) \neq 0$.
          Then the number of roots of $f$ in $(a,b)$ is equal to $V(a)-V(b)$, where $V(c)$ is
           the number of changes of sign in the ordered sequence $(f_0(c),\ldots,f_m(c))$, where zeroes are not taken into account.
    \end{teo}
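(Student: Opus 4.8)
The plan is to regard $V(x)$ as a function of $x\in[a,b]$ and to track exactly where and by how much it changes. Since $f(a)f(b)\neq 0$, both $V(a)$ and $V(b)$ are well defined. The set of points of $[a,b]$ at which some $f_i$ vanishes is finite (this is automatic in the intended application, where the $f_i$ are polynomials; in general one assumes finitely many zeroes), and on each open interval between consecutive such points every $f_i$ has a constant nonzero sign, so $V$ is constant there. Writing $V(x_0^-)$ and $V(x_0^+)$ for the values of $V$ immediately to the left and right of such a breakpoint $x_0$, the quantity $V(a)-V(b)$ telescopes into $\sum_{x_0}\big(V(x_0^-)-V(x_0^+)\big)$, so it suffices to show that this local jump equals $1$ when $f(x_0)=0$ and $0$ otherwise.

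First I would dispose of the interior functions. Suppose $f_i(x_0)=0$ with $1\le i\le m-1$. Condition (d) gives $f_{i-1}(x_0)f_{i+1}(x_0)<0$, so $f_{i-1}$ and $f_{i+1}$ are nonzero at $x_0$ and keep constant, opposite signs on a neighborhood of $x_0$. A direct case check on the triple $(f_{i-1},f_i,f_{i+1})$ shows that whenever the two outer entries have opposite signs there is exactly one sign change in the triple, regardless of whether the middle entry is positive, negative, or skipped (zero). Hence the contribution of positions $i-1,i,i+1$ to $V$ is $1$ on both sides of $x_0$, and crossing $x_0$ does not alter $V$. Note also that condition (d) forbids two consecutive $f_i$ from vanishing at the same point, and $f_m$ never vanishes by condition (b); this will let me treat the vanishing indices at a common breakpoint independently.

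The heart of the argument is the behaviour at a zero $x_0$ of $f_0=f$. By condition (c) we have $f_1(x_0)f_0'(x_0)>0$, so $f_0'(x_0)\neq0$ and $f_1(x_0)\neq0$, and $f_1$ keeps the sign of $f_0'(x_0)$ near $x_0$. The first-order expansion $f_0(x)=f_0'(x_0)(x-x_0)+o(x-x_0)$ shows that for $x$ slightly less than $x_0$ the sign of $f_0$ is opposite to that of $f_0'(x_0)$, hence opposite to the sign of $f_1$, giving one sign change in the pair $(f_0,f_1)$; while for $x$ slightly larger than $x_0$ the signs agree, giving none. Thus crossing a zero of $f_0$ decreases $V$ by exactly $1$.

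Putting these together, as $x$ runs from $a$ to $b$ the function $V$ is unchanged across every zero of an interior $f_i$ and drops by exactly $1$ across every zero of $f$, so $V(a)-V(b)$ equals the number of zeroes of $f$ in $(a,b)$. The step I expect to require the most care is the bookkeeping at a breakpoint where several of the $f_i$ vanish at once: here one uses that the vanishing indices are pairwise non-adjacent (condition (d)), so that the one-block analyses above do not interfere, together with the special roles of $f_0$ and $f_m$ identified above. A secondary technical point, needed to make the telescoping finite, is the isolation of the zeroes: condition (c) forces the zeroes of $f_0$ to be transversal and hence isolated, while in the polynomial setting relevant to this paper all the $f_i$ have finitely many zeroes on the compact interval $[a,b]$.
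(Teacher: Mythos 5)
The paper offers no proof of this statement: Sturm's Theorem is quoted as a classical result, with the reader referred to \cite{Sto2002}, so there is nothing internal to compare your argument against. On its own merits, your proof is the standard sign-variation argument and is correct in all essentials: $V$ is piecewise constant; a zero of an interior $f_i$ leaves $V$ unchanged because condition (d) forces $f_{i-1}$ and $f_{i+1}$ to keep fixed opposite signs nearby, so the triple always exhibits exactly one sign change; and a zero of $f_0$ lowers $V$ by exactly one because condition (c) ties the sign of $f_1$ to that of $f_0'$, giving one change in $(f_0,f_1)$ just to the left and none just to the right.

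Two refinements would make it airtight. First, the ``independence'' of several vanishing indices at one breakpoint $x_0$ is cleanly justified by cutting the sequence at the entries that are nonzero at $x_0$: by the non-adjacency you observed and by condition (b), these anchors flank every vanishing index; the sign-change count is additive over the resulting segments because consecutive segments share a nonzero entry; each interior segment is an (anchor, zero, anchor) block with opposite-signed ends, hence contributes exactly $1$ on both sides of $x_0$ (and at $x_0$ itself, which also settles the case where $a$ or $b$ is a zero of some interior $f_i$); only the initial segment $(f_0,f_1)$, present when $f(x_0)=0$, changes its contribution. Second, your auxiliary hypothesis that all the $f_i$ have finitely many zeroes can be dropped: your local analysis in fact shows that $V$ is locally constant on the set $\{x : f(x)\neq 0\}$, while condition (c) plus compactness gives finitely many zeroes $x_1<\dots<x_k$ of $f$ in $(a,b)$; constancy of $V$ on the components of $[a,b]\setminus\{x_1,\dots,x_k\}$ together with the unit drop across each $x_j$ yields $V(a)-V(b)=k$ with no assumption whatsoever on the other $f_i$. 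For the polynomial Sturm sequences the paper actually uses, your finiteness assumption is automatic and harmless, so this second point only matters for the theorem in the stated generality of continuous functions.
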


A Sturm's sequence for any polynomial $f$ with simple roots can be
easily found by a small variation of Euclid's  algorithm for finding
the greatest common divisor, see again the classical book
\cite{Sto2002}.

\subsection{Chebyshev systems}\label{ss:ect}

     For the
    characterization of Chebyshev systems in an open interval we will use the following results which can be
     found in \cite{KarStu1966} and \cite{Mardesic1998}.

    \begin{defi}
        Let $ u_0,\ldots, u_ {n-1}, u_n $ be functions defined in an open interval $ L $ of $ \re $.
        \begin{itemize}
            \item [(a)] The set of functions $( u_i)_{i=0}^n$ form a  Chebyshev system, or for short  $T$-system, on $L$ if any
             nontrivial linear combination $a_0u_0 +\cdots+ a_n u_n$ has at most $ n $ isolated roots in $ L $.

            \item [(b)] The ordered set of functions $( u_i)_{i=0}^n$ form a complete Chebyshev system, or for short a  $CT$-system, on
             $L$ if $( u_i)_{i=0}^k$ form a $T$-system for all $k=0,1,\ldots,n$.

            \item[(c)]   The ordered set of functions $ (u_i ) _ {i = 0} ^ n $ form an extended complet Chebyshev system, or
            for short an $ ECT $-system, on $ L $ if any nontrivial linear combination $
a_0u_0 + \cdots+ a_k u_k $
            has at most $ k $ isolated roots in $ L $ counting multiplicity, for every $ k = 0,1,\ldots, n $.
        \end{itemize}
    \end{defi}

    Notice that  an  $ECT$-system on $L$ is also a  $CT$-system on $L$.
    \begin{defi}
        Let $u_0,\ldots,u_n$ functions that have derivatives until order $n$ on $L$.  The Wronskian of such functions in $x\in L$ is given by
        $$W(u_0,\ldots,u_n)(x) =  \left| \begin{array}{ccc}
        u_0(x) & \cdots & u_n(x) \\
        u_0'(x) &  \cdots & u_n'(x) \\
        \vdots &  \ddots & \vdots \\
        u_0^{(n)}(x)  & \cdots & u_n^{(n)}(x)
        \end{array} \right|.$$
    \end{defi}
The following result is the most common approach to prove that a set
of functions forms an $ECT$-system.
    \begin{lema}\label{lemasistemaTintervaloaberto}
        The ordered set of functions $(u_0, \ldots, u_n)$ forms an $ ECT $-system on $ L $
        if, and only if, for every $ k = 0, \ldots, n $, $ W (u_0, \ldots, u_k) (x) \neq 0 $ for every $ x \in L $.
    \end{lema}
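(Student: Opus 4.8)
The plan is to prove both implications, treating the two directions separately; the reverse implication (nonvanishing Wronskians $\Rightarrow$ ECT) is the substantial one and I would handle it by induction on the number of functions, while the forward implication is a short direct argument. Throughout I assume the functions are of class $\mathcal{C}^n$ on $L$, so that all the Wronskians are defined and zeros can be counted with multiplicity.

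For the forward direction, suppose $(u_0, \ldots, u_n)$ is an ECT-system but some $W(u_0, \ldots, u_k)$ vanishes at a point $x_0 \in L$. Then the $k+1$ vectors $\big(u_j(x_0), u_j'(x_0), \ldots, u_j^{(k)}(x_0)\big)$, $j = 0, \ldots, k$, are linearly dependent, so there exist constants $a_0, \ldots, a_k$, not all zero, with $\sum_{j} a_j u_j^{(i)}(x_0) = 0$ for $i = 0, 1, \ldots, k$. The function $f = \sum_j a_j u_j$ is then a nontrivial linear combination (it is not identically zero, since an ECT-system is in particular a $T$-system and hence its members are linearly independent) having a zero of multiplicity at least $k+1$ at $x_0$. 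This gives more than $k$ roots counting multiplicity, contradicting the ECT property. Hence no $W(u_0, \ldots, u_k)$ vanishes on $L$.

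For the reverse direction I would argue by induction on $k$. The base case $k = 0$ is immediate: $W(u_0) = u_0 \neq 0$ means $u_0$ has no zeros, so any $a_0 u_0$ with $a_0 \neq 0$ has no roots. For the inductive step, since $W(u_0) = u_0$ does not vanish, I set $w_i = (u_i/u_0)'$ for $i = 1, \ldots, k$ and use the classical Wronskian reduction identity
\[
W(w_1, \ldots, w_j) = \frac{W(u_0, u_1, \ldots, u_j)}{u_0^{\,j+1}}, \qquad j = 1, \ldots, k,
\]
which shows that all partial Wronskians of $(w_1, \ldots, w_k)$ are nonvanishing on $L$. By the induction hypothesis applied to these $k$ functions, $(w_1, \ldots, w_k)$ is an ECT-system, so it only remains to bound, at level $k$, a nontrivial $f = \sum_{j=0}^k a_j u_j$; if $a_k = 0$ it reduces to a combination of $u_0, \ldots, u_{k-1}$, already known to have at most $k-1$ roots, so assume $a_k \neq 0$. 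Writing $g = f/u_0 = a_0 + \sum_{j=1}^k a_j (u_j/u_0)$, we get $g' = \sum_{j=1}^k a_j w_j$, a nontrivial combination of the $w_i$, which therefore has at most $k-1$ zeros counting multiplicity. By the generalized Rolle theorem, if $g$ had $N$ zeros counting multiplicity then $g'$ would have at least $N-1$; hence $N \leq k$. Since $u_0$ does not vanish, $f$ and $g$ share the same zeros with the same multiplicities, so $f$ has at most $k$ roots, completing the induction.

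The main obstacle is the technical backbone of the reverse direction: establishing the Wronskian reduction identity (a determinant manipulation, essentially row reduction after factoring out $u_0$) and the generalized Rolle theorem for zeros counted with multiplicity, which refines the usual statement by combining the interior zeros produced between consecutive roots with the drop in multiplicity at a repeated root. Both are standard, and full details can be found in \cite{KarStu1966, Mardesic1998}, but they are precisely the points where the ``counting multiplicity'' bookkeeping must be carried out carefully.
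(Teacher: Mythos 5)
Your proof is correct, but there is nothing in the paper to compare it against: the authors do not prove this lemma at all; it is quoted as a classical characterization, with the reader referred to \cite{KarStu1966} and \cite{Mardesic1998}. What you have written is precisely the standard argument from that literature: the forward direction by observing that a vanishing Wronskian makes the columns $\big(u_j(x_0),\ldots,u_j^{(k)}(x_0)\big)$ linearly dependent, producing a nontrivial combination with a zero of multiplicity $k+1$; and the reverse direction by P\'olya-type induction on the number of functions, using the reduction $w_i=(u_i/u_0)'$, the identity $W(w_1,\ldots,w_j)=W(u_0,\ldots,u_j)/u_0^{\,j+1}$, and Rolle's theorem with multiplicities. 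Both implications are sound, the reduction identity is correctly stated (checkable via $W(fg_0,\ldots,fg_j)=f^{j+1}W(g_0,\ldots,g_j)$ and $W(1,v_1,\ldots,v_j)=W(v_1',\ldots,v_j')$), and the two ingredients you defer to the references are indeed routine.

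One caveat is worth recording. In the forward direction, your parenthetical claim that $f$ cannot vanish identically (i.e., that an ECT-system is linearly independent) relies on the convention that an identically zero combination counts as having more than $k$ roots. Under the paper's literal wording, which counts only \emph{isolated} roots, that inference --- and indeed the lemma itself --- would fail: for $u_0=u_1\equiv 1$ every nontrivial combination is constant and so has no isolated roots, yet $W(u_0,u_1)\equiv 0$. The same reading is needed to conclude that a zero of multiplicity $k+1$ at $x_0$ violates the ECT bound when the functions are merely $\mathcal{C}^n$ rather than analytic (a priori such a zero need not be isolated). This is a defect of the paper's definition rather than of your argument; under the standard zero-counting convention of \cite{KarStu1966} and \cite{Mardesic1998} --- which is clearly what is intended, and which suffices for the paper's application to analytic Abelian integrals --- your proof is complete.
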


    \begin{rem}
        If $(J_0, J_1,\ldots, J_n ) $ forms an $ECT$-system on $ L $ then $\sum_ {i = 0} ^ n \alpha_i J_i =0 $
        has the same roots bifurcation diagram that  $\sum_ {i = 0} ^ n \beta_i t ^ i =0$ for the simple  $ECT$-system
        $(1,t,\ldots,t^n).$
         In particular, the  coefficients $\alpha_i$ can be chosen  such that  $\sum_ {i = 0} ^ n \alpha_i J_i =0$ has $n$ simple roots in
         $L.$
    \end{rem}

Next result was developed by Grau, Ma\~{n}osas and Villadelprat
(\cite{GraManVil2011,ManVil2002})  and  is an extension of a
previous work of Li and Zhang \cite{LiZhang1996} where the authors
provided a sufficient condition for the monotonicity of the ratio of
two Abelian integrals. It allows to prove that a set of Abelian
integrals, of some special shape and  for a special type of
Hamiltonian system, form a Chebyshev system, simply proving that a
similar property is satisfied by the integrands. Next, we state a
version of Theorem B in \cite{GraManVil2011} adapted to our
interests.

Recall that when $ A $ has a minimum in $ x = 0 $ then the origin of
the Hamiltonian systems has a center. Moreover, $A$ has an
associated involution $\sigma$ such
  that $A(x)=A(\sigma(x))$ for $x\in(x_l,x_r)\ni0.$

\begin{teo}\label{criteriointabeliana}
    {\rm (\cite{GraManVil2011})}
        Let us consider the $n$ Abelian integrals
        \begin{equation*}
        J_k(h) = \displaystyle\int_{\gamma_h}  f_k(x) y^{2s-1}dx, \; 0<s \in \N,  \; k=0,\ldots,n-1,
        \end{equation*}
        where each $f_k(x)$ is an analytic function and, for each  $h \in (0,h_0)$, $\gamma_h$ is the oval
        surrounding the origin contained in the level set $\gamma_h=\{ A(x) +y^{2}/2 = h \}$. Let $\sigma$ be
         the involution associated to $A$, and define
        $$\ell_k(x) = \dfrac{f_k(x)}{A'(x)}-\dfrac{f_k(\sigma(x))}{A'(\sigma(x))}.$$
       If $(\ell_0,\ldots,\ell_{n-1})$ is a
        $CT$-system on $(0,x_r)$ and $s>2(n-2)$ then $(J_0,\ldots,J_{n-1})$ is an $ECT$-system on $(0,h_0)$.
    \end{teo}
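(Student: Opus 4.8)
The plan is to collapse the $n$ Abelian integrals into a single fractional-integral transform of the functions $\ell_k$ and then to establish the $ECT$ property through the Wronskian criterion of Lemma~\ref{lemasistemaTintervaloaberto}. First I would use the involution $\sigma$ to fold each $J_k$ onto the right branch $x\in(0,x_r)$. On the upper half of $\gamma_h$ we have $y=\sqrt{2(h-A(x))}$, and since $2s-1$ is odd the two halves of the oval contribute with the same sign; splitting the base interval $[x_-(h),x_+(h)]$ at the minimum $x=0$ and substituting $x\mapsto\sigma(x)$ on the left piece (where $A(\sigma(x))=A(x)$, so $\sigma'(x)=A'(x)/A'(\sigma(x))$ and $\sigma$ is decreasing with $\sigma(0)=0$) gathers both integrands into the combination $A'(x)\ell_k(x)$, giving, up to a nonzero constant,
\[
J_k(h)=-2\int_0^{x_+(h)}A'(x)\,\ell_k(x)\,\bigl(2(h-A(x))\bigr)^{s-1/2}\,dx,
\]
where $x_+(h)$ is the right endpoint of the oval, $A(x_+(h))=h$.

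Next I would change variables by $t=A(x)$, which is an increasing diffeomorphism of $(0,x_r)$ onto $(0,h_0)$. Setting $g_k:=\ell_k\circ\bigl(A|_{(0,x_r)}\bigr)^{-1}$ and absorbing constants yields
\[
J_k(h)=c\int_0^h g_k(t)\,(h-t)^{s-1/2}\,dt,\qquad c\neq0.
\]
Because $\bigl(A|_{(0,x_r)}\bigr)^{-1}$ is increasing, ordered nodes map to ordered nodes, so $(\ell_0,\dots,\ell_{n-1})$ being a $CT$-system on $(0,x_r)$ is equivalent to $(g_0,\dots,g_{n-1})$ being a $CT$-system on $(0,h_0)$. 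By the determinantal characterization of complete Chebyshev systems in \cite{KarStu1966}, this means precisely that $\det[g_i(t_j)]_{0\le i,j\le k-1}$ is nonzero and of constant sign on $\{0<t_1<\cdots<t_k<h_0\}$ for every $k\le n$.

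The core step is to show $W(J_0,\dots,J_{k-1})(h)\neq0$ for all $k\le n$. Differentiating under the integral sign lowers the exponent: $J_i^{(m)}(h)=c_m\int_0^h g_i(t)(h-t)^{s-1/2-m}\,dt$ with $c_m=(s-\tfrac12)(s-\tfrac32)\cdots(s-\tfrac12-m+1)\neq0$, the boundary terms vanishing as long as the exponent stays positive. Expanding the resulting determinant of integrals by multilinearity in the columns, assigning a variable $t_j$ to the $j$-th column and factoring $g_j(t_j)(h-t_j)^{s-1/2}$ out, the surviving $(h-t_j)^{-m}$ entries form a Vandermonde determinant in the variables $(h-t_j)^{-1}$. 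Symmetrizing over the $t_j$ (the Vandermonde is antisymmetric while the power kernel is symmetric) collapses the product $\prod_j g_j(t_j)$ into the nodal determinant $\det[g_i(t_j)]$, so that, restricting to the ordered simplex,
\[
W(J_0,\dots,J_{k-1})(h)=\Big(\prod_{m=0}^{k-1}c_m\Big)\int_{0<t_1<\cdots<t_k<h}\det[g_i(t_j)]\,\prod_{i<j}\frac{t_j-t_i}{(h-t_i)(h-t_j)}\,\prod_{i}(h-t_i)^{s-1/2}\,d\mathbf{t}.
\]
On the simplex the Vandermonde factor and the power kernel are strictly positive, while the $CT$-hypothesis makes $\det[g_i(t_j)]$ sign-definite; hence the whole integrand has constant sign and the integral is nonzero. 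By Lemma~\ref{lemasistemaTintervaloaberto}, $(J_0,\dots,J_{n-1})$ is an $ECT$-system on $(0,h_0)$.

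The main obstacle is making this core Wronskian identity rigorous: justifying the repeated differentiation under the integral sign, the multilinear expansion, and the symmetrization, and—most delicately—verifying the integrability of the kernel near $t_i\to h$. Each $t_i$ carries a net power $(h-t_i)^{\,s-1/2-(k-1)}$, so convergence of the top-level Wronskian requires $s-\tfrac12-(k-1)>-1$ for $k$ up to $n$; the hypothesis $s>2(n-2)$ is a convenient sufficient bound guaranteeing both that all boundary terms vanish and that every such kernel remains integrable. Everything else—the involution folding and the passage $t=A(x)$—is routine once one is careful with the orientation of $\gamma_h$ and with the fact that $\sigma$ reverses orientation.
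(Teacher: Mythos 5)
First, a point of reference: the paper does not prove this statement at all --- it is quoted, in adapted form, from \cite{GraManVil2011} --- so your attempt has to be measured against the proof in that reference. That proof never leaves the calculus of Abelian integrals: it extends the two-integral monotonicity criterion of Li and Zhang \cite{LiZhang1996} by an induction on the number $n$ of integrals, manipulating the integrals themselves through the Gelfand--Leray derivative and exponent-shifting identities of the kind reproduced in Lemma \ref{lemaexpoente}, and the hypothesis on $s$ is what keeps those manipulations legitimate along the induction. Your route is genuinely different, and it is correct. After the folding by $\sigma$ (your formula $J_k(h)=-2\int_0^{x_+(h)}A'(x)\ell_k(x)\bigl(2(h-A(x))\bigr)^{s-1/2}dx$ is right, and the overall sign is irrelevant) and the substitution $t=A(x)$, you exhibit each $J_k$ as an Abel-type transform $J_k(h)=c\int_0^h g_k(t)(h-t)^{s-1/2}dt$ of $g_k=\ell_k\circ\bigl(A|_{(0,x_r)}\bigr)^{-1}$, and you then compute every Wronskian $W(J_0,\ldots,J_{k-1})$ in closed form via the Andr\'eief (basic composition) identity, reducing its nonvanishing to the sign-definiteness of the nodal determinants $\det[g_i(t_j)]$ on the ordered simplex, which is exactly the Karlin--Studden determinantal form of the $CT$ hypothesis; this is the classical total-positivity argument for the kernel $(h-t)_+^{s-1/2}$. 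As to what each approach buys: the argument of \cite{GraManVil2011} is the one that generalizes to the further refinements of that paper, whereas yours is self-contained, produces explicit integral formulas for all the Wronskians, and in fact needs only $s-\tfrac12-(k-2)>0$ for every $k\le n$ (this single inequality governs both the repeated differentiation under the integral sign and the integrability of the symmetrized kernel), i.e. $s\ge n-1$ for integer $s$; since $s>2(n-2)$ implies $s\ge n-1$, your argument proves the stated theorem, and for $n\ge3$ it even proves a sharper version with a weaker hypothesis on $s$.

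Two details should still be nailed down. (1) The implication ``$(\ell_0,\ldots,\ell_{n-1})$ is a $CT$-system $\Rightarrow$ $\det[g_i(t_j)]\neq 0$ for all $0<t_1<\cdots<t_k<h_0$'' requires the $\ell_i$ to be linearly independent: with the paper's definitions, which count only \emph{isolated} roots, a linearly dependent family can satisfy the hypothesis (an identically zero combination has no isolated roots) while all your Wronskians vanish identically and Lemma \ref{lemasistemaTintervaloaberto} becomes inapplicable. Since the $\ell_i$ are analytic on $(0,x_r)$ (where $A'\neq0$), linear independence is precisely what guarantees that nontrivial combinations have only isolated zeros, so this hypothesis should be made explicit or the degenerate case discussed separately. (2) Because $A'(0)=0$, the functions $\ell_k$ generically blow up like $1/x$ as $x\to0^{+}$, so $g_k(t)\sim Ct^{-1/2}$ as $t\to0^{+}$; this singularity is integrable and harmless, but your dominated-convergence justification of $J_i^{(m)}(h)=c_m\int_0^h g_i(t)(h-t)^{s-1/2-m}dt$ and the Fubini step behind the Andr\'eief expansion must acknowledge it explicitly rather than treat $g_k$ as continuous up to $t=0$.
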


 When condition $s>2(n-2)$ is not  fulfilled it is possible, in some situations,
 to obtain equivalent expressions of the Abelian integrals for which the corresponding  new ``$s$'' is large enough to
 verify the inequality, see next lemma.
 The procedure for obtaining these new Abelian integrals follows from Lemma 4.1 of \cite{GraManVil2011} and other tricks developed in that paper.
   For the sake of completeness we also include its proof.

\begin{lema}\label{lemaexpoente}
Let $\gamma_h$ be an oval inside the level set $\{A(x) +y^2/2 =
h\}.$
\begin{enumerate}[(i)]

\item If $F$ is a smooth function such that $F/A'$ is analytic
at $x=0,$ then  for  $s \in \N\cup\{0\}$,
$$ \displaystyle\int_{\gamma_h}  F(x) y^{s}dx =  \displaystyle\int_{\gamma_h}  \left( \dfrac{F(x)}{(s+2)A'(x)} \right)' y^{s+2}dx.$$

\item If $F$ is a smooth function such that $F\cdot A/A'$ is analytic
at $x=0,$ then  for  $s \in \N\cup\{0\}$,
$$ h\displaystyle\int_{\gamma_h}  F(x) y^{s}dx =  \displaystyle\int_{\gamma_h}
 \left(\left( \dfrac{F(x)A(x)}{(s+2)A'(x)} \right)'+\dfrac{F(x)}2\right) y^{s+2}dx.$$
\end{enumerate}
\end{lema}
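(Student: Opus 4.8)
The plan is to derive both identities from a single integration-by-parts formula on the closed oval, obtained by integrating an exact one-form. First I would fix $s\in\N\cup\{0\}$ and, for a smooth function $G$ defined near the oval, consider the one-form $d\bigl(G(x)y^{s+2}\bigr)$. Since $y^{s+2}$ is a polynomial in $y$ and $G$ depends only on $x$, this is a genuine single-valued exact form, so its integral around the closed curve $\gamma_h$ vanishes. Expanding,
\[
0=\int_{\gamma_h} d\bigl(G(x)y^{s+2}\bigr)=\int_{\gamma_h} G'(x)y^{s+2}\,dx+(s+2)\int_{\gamma_h} G(x)y^{s+1}\,dy.
\]
On $\gamma_h$ the level relation $A(x)+y^2/2=h$ gives $A'(x)\,dx+y\,dy=0$, hence $y^{s+1}\,dy=-A'(x)y^{s}\,dx$. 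Substituting yields the key identity
\[
\int_{\gamma_h} G'(x)\,y^{s+2}\,dx=(s+2)\int_{\gamma_h} G(x)A'(x)\,y^{s}\,dx.\qquad(\star)
\]

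For part (i) I would simply take $G=F/\bigl((s+2)A'\bigr)$, so that $(s+2)G\,A'=F$; the right-hand side of $(\star)$ becomes $\int_{\gamma_h}F(x)y^{s}\,dx$ and the left-hand side is the claimed expression. The hypothesis that $F/A'$ be analytic at $x=0$ is exactly what makes $G$ smooth at the center, where $A'(0)=0$, so that the exact-form argument is legitimate.

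For part (ii) I would combine $(\star)$ with the same level relation written as $A(x)=h-y^2/2$. This gives $F(x)A(x)y^{s}=hF(x)y^{s}-\tfrac12 F(x)y^{s+2}$ on $\gamma_h$, so that
\[
h\int_{\gamma_h}F(x)y^{s}\,dx=\int_{\gamma_h}F(x)A(x)\,y^{s}\,dx+\frac12\int_{\gamma_h}F(x)\,y^{s+2}\,dx.
\]
Choosing $G=F\,A/\bigl((s+2)A'\bigr)$ in $(\star)$ rewrites the first integral on the right as $\int_{\gamma_h}\bigl(FA/((s+2)A')\bigr)'y^{s+2}\,dx$, and adding the second term produces exactly the asserted integrand. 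Here the requirement that $F\cdot A/A'$ be analytic at $x=0$ plays the role that $F/A'$ played in part (i), ensuring this new $G$ is admissible.

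The computation is essentially routine; the only point demanding care is the behaviour at the center $x=0$, where $A'$ vanishes and $1/A'$ is singular. The analyticity hypotheses are tailored precisely so that the auxiliary functions $G$ remain smooth there, guaranteeing that $d\bigl(G(x)y^{s+2}\bigr)$ is a legitimate exact form and that all integrals converge. I would also note that, since $y$ changes sign across the oval, one may equivalently verify $(\star)$ by splitting $\gamma_h$ into its upper and lower branches over $x\in[x_-,x_+]$ (with $A(x_\pm)=h$) and integrating by parts in $x$; this avoids invoking exactness and makes the boundary terms $G(x)y^{s+2}\big|_{x_\pm}=0$ manifest, since $y$ vanishes at the endpoints.
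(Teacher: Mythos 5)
Your proposal is correct and follows essentially the same route as the paper: integrate the exact form $d\bigl(G(x)y^{s+2}\bigr)$ over the closed oval, use the relation $A'(x)\,dx+y\,dy=0$ on $\gamma_h$, and specialize $G$ to $F/\bigl((s+2)A'\bigr)$ for (i), then write $h=A(x)+y^2/2$ and reuse (i) with $F$ replaced by $FA$ for (ii). The only cosmetic difference is that you isolate the general identity $(\star)$ before specializing, whereas the paper plugs in the specific $G$ directly.
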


\begin{proof} (i) Notice that
\begin{align*}
 0=\displaystyle\int_{\gamma_h}  d\left( \dfrac{F(x)}{(s+2)A'(x)}
 y^{s+2}\right)&= \displaystyle\int_{\gamma_h}  \left( \dfrac{F(x)}{(s+2)A'(x)}
 \right)'y^{s+2} dx + \displaystyle\int_{\gamma_h} +\dfrac{F(x)}{A'(x)}
 y^{s+1}\,dy\\&=\displaystyle\int_{\gamma_h}  \left( \dfrac{F(x)}{(s+2)A'(x)}
 \right)'y^{s+2} dx -\displaystyle\int_{\gamma_h} F(x)
 y^{s}\,dx,
\end{align*}
where in the last equality we have used that $A'(x)\,dx+y\,dy=0$ on
$\gamma_h.$

(ii) In this case,
\begin{align*}
 h\displaystyle\int_{\gamma_h}  F(x) y^{s}dx =& \displaystyle\int_{\gamma_h}\left(A(x)+\frac{y^2}2\right)  F(x) y^{s}dx=\int_{\gamma_h}
 A(x)F(x) y^{s}dx+\displaystyle\int_{\gamma_h}
\dfrac{F(x)}2 y^{s+2}dx \\=& \displaystyle\int_{\gamma_h}
 \left(\left( \dfrac{F(x)A(x)}{(s+2)A'(x)} \right)'+\dfrac{F(x)}2\right)
 y^{s+2}dx,
 \end{align*}
where in the last step we have used item (i).
\end{proof}

\section{Proof of Theorem \ref{th:main2} and Corollary \ref{corolcotamaxima01}} \label{sec:3}

\begin{proof}[Proof of Theorem \ref{th:main2}]

(i) We start listing the set of ordered Abelian integrals
\[
I_0(h),I_1(h),I_3(h),I_4(h),I_6(h),\ldots,
I_{3k-2}(h),I_{3k}(h),I_{3k+1}(h),I_{3k+3}(h),\ldots,I_m(h),
\]
where we have removed from the list of all functions $I_j, 0\le j\le
n,$ the ones  with $j\equiv 2 \pmod 3$ and $m=n$ unless $n\equiv 2
\pmod 3,$ case in which $m=n-1.$ For the sake of notation we will
denote the above functions by
\[
F_0(h),F_1(h),F_2(h),\ldots, F_{N(n)}(h),
\]
keeping the same order. Notice that then,
\begin{equation}\label{eq:Fs}
I(h)=\sum_{j=0}^{N(n)} c_j F_j(h),
\end{equation}
where $c_j$ can be taken as arbitrary constants.

We claim that these $N(n)+1$ functions are linearly independent.
Moreover, since $F_0(h)=I_0(h)=-A(h),$ the function $F_0$ does not
vanish for $h\in(0,1/6).$ Hence, we can apply Lemma \ref{le:li} and
item (i) follows.

Let us prove the claim. First of all, notice that $I(h)$ can be
written as
\[
I(h)= v_{[N(n)/2]}(h) I_0(h)+ w_{[(N(n)-1)/2]}(h) I_1(h),
\]
where $v_k$ and $w_k$ are arbitrary polynomials of degree $k,$
because, by using item (iv) of Proposition \ref{pr:pf}, we know that
each time that for a given $m$ we consider some new terms
$F_{2m}(h)$ and $F_{2m+1}(h)$ in $I(h)$ it is equivalent to the
appearance of  two new terms of the form $h^m I_0(h)$ and $h^m
I_1(h)$ in the expression of $I(h).$ Notice that for each $n,$
$I(h)$ is expressed as a linear combination  $N(n)+1$ functions of
the form $h^jI_0(h)$ and $h^kI_1(h),$ for suitable  $0\le j\le
[N(n)/2]$ and $0\le k\le [(N(n)-1)/2].$

In order to prove the claim, assume that we consider a linear
combination of them that gives identically zero. Then
\[
\frac{I_1(h)}{I_0(h)}\equiv
-\frac{v_{[N(n)/2]}(h)}{w_{[(N(n)-1)/2]}(h)}=:\frac{v(h)}{w(h)},
\]
or, in other words, we had that $I_1/I_0$ would be the rational
function  $v/w.$ On the other hand we know that at
\[
A(h)\sim A_0 \quad\mbox{and} \quad T(h)\sim
k\ln(1-6h)\quad\mbox{when}\quad h\uparrow 1/6,
\]
for some $0<k\in R.$ This is so for the area function $A(h)$ because
$A_0$ is the area surrounded by the homoclinic loop contained in
$\{x^2/2+x^3/3+y^2/2=1/6\}$ and for the period function $T(h),$
because we know that $\lim_{h\uparrow 1/6}T(h)=\infty$ and its dominant
asymptotic term is given by the passage time near the hyperbolic
saddle $(-1,0)$ of the system \eqref{sistemaintegralabeliana} with
$\varepsilon=0,$ see for instance \cite{GasManVil}.

By using \eqref{eq:quo} and that $I_0(h)=-A(h)$ we get that
\[
\frac{T(h)}{A(h)}=\frac1{(6h-1)h}\left(5h-1-\frac76\frac{I_1(h)}{I_0(h)}\right)=\frac1{(6h-1)h}\left(5h-1-\frac76\frac{v(h)}{w(h)}\right)
\]
would be a rational function. This is a contradiction unless $v=w=0$
because when $h\uparrow1/6$ the left-hand side  of the above
equality goes to infinity as $k\ln(1-6h)$  and the right-hand only
can go to infinity with speed $c(1-6h)^{-m}$ for some $0<m\in N$ and
$c\in\R.$

(ii) From Proposition \ref{pr:AT} we know that all the Abelian
integrals $I_j$ can be expressed in terms of polynomials of $h$,
$T(h)$ and $A(h).$ Similarly in Proposition \ref{pr:pf} we get a
similar property but changing $A(h)$ and $T(h)$ by $I_0(h)=-A(h)$
and $I_1(h).$ Moreover $A'(h)=T(h)$ and $I_1(h)$ can be obtained
from $I_0(h)$ and $I_0'(h)$, see equation \eqref{eq:I1}. In any
case, given the Taylor series at $h=0$ of any of the following
functions
\[
T(h),\, A(h) \quad \mbox{or}\quad I_0(h)
\]
the Taylor series at $h=0$ of all the other function $I_j(h),$ $0\le
j$ can be easily obtained by using the results of these propositions
and, equivalently the Taylor series of all the $F_j(h),$ also at
$h=0.$   For convenience we introduce the function $G_j(h)=
F_j(h)/(\pi h)$  for all $j\ge0$ and the the expression of
\eqref{eq:Fs} for $h\in(0,1/6)$ can be written as
\begin{equation*}
\frac{  I(h)}{\pi h}=\sum_{j=0}^{N(n)} d_j G_j(h),
\end{equation*}
for arbitrary real $d_j.$ For instance, from the expression of
$T(h)$ given in \eqref{eq:th} given in Subsection \ref{ss:apl} or
the one of $I_0(h)$ we obtain that
\begin{align*}
G_{0}(h) =&  -2 - \dfrac{5}{6}  h - \dfrac{385}{216}  h^2 -
\dfrac{85085}{15552}  h^3  - \dfrac{7436429}{373248}
 h^4 - \dfrac{1078282205}{13436928}  h^5  +O(h^6),\\
G_1(h) =&  h + \dfrac{35}{18} h^2 + \dfrac{5005}{864} h^3 +
\dfrac{323323}{15552}h^4 + \dfrac{185910725}{2239488}h^5 +
 \dfrac{4775249765}{13436928}h^6  +O(h^7),\\
G_2(h) =&  \dfrac{5}{3} h^2 + \dfrac{385}{72}h^3+ \dfrac{17017}{864}h^4 + \dfrac{7436429}{93312}h^5+  \dfrac{770201575}{2239488}h^6 +O(h^7),\\
G_3(h) =& -h ^2 - \dfrac{35}{8} h^3- \dfrac{5005}{288} h^4 -
\dfrac{2263261}{31104} h^5-  \dfrac{26558675}{82944}
h^6+O(h^7),\\
G_4(h) =&   -\dfrac{5}{4} h^3- \dfrac{77}{8} h^4-\dfrac{85085}{1728}
h^5 - \dfrac{7436429}{31104} h^6- \dfrac{770201575}{663552}
h^7+O(h^8),
    \end{align*}
and similarly we have obtained all $G_j(h)$ for $0\le j\le N(50)=
33$ until order $50.$ Of course, we do not explicite them. To prove
that the function $(G_j(h))_{j=0}^{N(50)}$ are an ECT system in a
neighborhood $(0,h_1)$ of $h=0, $ from Lemma
\ref{lemasistemaTintervaloaberto} it suffices to prove that the
following Wronskians
$$ W_k(h)= W_k(G_0,\ldots,G_k)(h) =  \left|\begin{array}{ccc}
    G_0(h) & \cdots & G_k(h) \\
    G_0'(h) &  \cdots & G_k'(h) \\
    \vdots &  \ddots & \vdots \\
    G_0^{(k)}(h)  & \cdots & G_k^{(k)}(h)
    \end{array} \right|,$$
do not vanish at $h=0,$ for $k=0,1,\dots,N(50).$

After some tedious calculations we get that $W_k(0)\ne0$ for all
these values of $k.$ For instance, $W_0(0)=-2,$ $W_1(0)=-2,$
$W_2(0)=-20/3,$ $W_3(0)=140/3,$ $W_4(0)=-12320/3,$
\[
W_5(0)= - \dfrac{11211200}{9},\,\, W_6(0)= -
\dfrac{83859776000}{9},\,\,W_7(0)= \dfrac{2899871054080000}{9},
\]
and so on. The result on the $G_j's$ implies the one for the $F_j's$
and, as a consequence, the desired result for the $I_j, j\not\equiv
2 \pmod 3.$

 (iii)   We will apply Theorem \ref{criteriointabeliana} when
    $A(x)=x^2/2+x^3/3$ to the Abelian integrals $I_0(h)$ and $I_1(h).$ By this
    result it suffices  to prove  that $(\ell_0,\ell_1)$ is an $ECT$-system in $\left(0,\frac{1}{2}\right)$,
     where
    \begin{equation*}
    \ell_0(x)= \dfrac{1}{x(1+ x)} - \dfrac{1}{\sigma(x)(1+ \sigma(x))}
    \quad \mbox{and}\quad \ell_1(x)=  \dfrac{1}{1+ x} - \dfrac{1}{1+ \sigma(x)}.
    \end{equation*}
    Derivating implicitly $S(x,\sigma(x))=0,$ where $S$ is given in
    \eqref{eq:W}, we get that
    \[
    \sigma'(x)=-\frac{4x+2z+3}{2x+4z+3},
    \]
where $z=\sigma(x).$ Moreover their  Wronskians are
    \begin{equation*}
    \begin{aligned}
    & W_0(\ell_0)(x) = \ell_0(x) = \dfrac{ (1+x+z)(z -x)}{x(1+x)z(1+z)}, \\
    & W_1(\ell_0, \ell_1)(x) = \dfrac{(z-x)^3
    (4x^2+6xz+4z^2+7x+7z+3)}{x^2(1+x)^2z^2(1+z)^2(2x+4z+3)}.
    \end{aligned}
    \end{equation*}
We will prove that all factors do not vanish when $x\in(0,1/2).$ For
the first function this holds trivially for all factors but one,
$1+x+z=1+x+\sigma(x)$ which is precisely the one that we have
studied in detail in Section \ref{ss:param} with our approach using
rational parameterizations, see equation \eqref{eq:exx}.

Let us study the remaining factors $R_2(x,z)=4x^2+6xz+4z^2+7x+7z+3$
and $R_1(x,y)=2x+4z+3.$ For them it suffices to use the resultants
approach, also explained in Section \ref{ss:param}. It holds that
\[
\operatorname{Res}_z\big(R_2(x,z),S(x,z)\big)
=\frac49x^4+\frac89x^3-\frac29x^2-\frac23x+\frac12,
\]
and
\[
\operatorname{Res}_z\big(R_1(x,z),S(x,z)\big) =(2x+3)(2x-1),
\]
both not vanishing for $x\in(0,1/2),$ as desired. In fact, for the
first one, Sturm's approach proves that it has no real roots.

(ii) We want to use the same approach that in item (i) to prove that
the functions $I_0,I_1$ and $I_3$ form a Chebyshev system, but we
must reorder them because, otherwise our approach fails. We will
prove that $(I_3(h),I_1(h),I_0(h))$ form a Chebyshev system on
$(0,1/6),$ or equivalently that the functions
$(hI_3(h),hI_1(h),hI_0(h))$ form an ECT. By item (ii) of Lemma
\ref{lemaexpoente} we have that
\begin{align*}
hI_0(h)=& h \int_{\gamma_h} y\,dx= \int_{\gamma_h}f_0(x)
y^3\,dx,\quad\mbox{with} &&f_0(x)=
\frac{11x^2+22x+12}{18(1+x)^2},\\
hI_1(h)=& h \int_{\gamma_h} x y\,dx= \int_{\gamma_h}f_1(x)
y^3\,dx,\quad\mbox{with} &&f_1(x)=
\frac{x(13x^2+27x+15)}{18(1+x)^2},\\
hI_3(h)=& h \int_{\gamma_h} x^3 y\,dx=
\int_{\gamma_h}f_3(x)y^3\,dx,\quad\mbox{with} &&f_3(x)=
\frac{x^3(17x^2+37x+21)}{18(1+x)^2}.\\
\end{align*}
To apply Theorem \ref{criteriointabeliana}, first  we consider the
functions
\[
\ell_i(x)=\frac{f_i(x)}{A'(x)}-\frac{f_i(\sigma(x))}{A'(\sigma(x))},
\quad i=0,1,3.
\]
and compute the following Wronskians, where $z=\sigma(x),$
\begin{align*}
& W_0(\ell_3)(x) = \ell_3(x) = \dfrac{(x-z)R_6(x,z)}{18(1+x)^3(1+z)^3}, \\
    & W_1(\ell_3, \ell_1)(x) = \dfrac{(x-z)^3R_7(x,z)}
    {324(2x+4z+3)(1+x)^5(1+z)^5}, \\
      & W_2(\ell_3, \ell_1,\ell_0)(x) = \dfrac{(x-z)^6R_{12}(x,z)}
      {2916(2x+4z+3)^3x^3z^3(1+x)^7(1+z)^7}
\end{align*}
where
\begin{align*}
R_6(x,z)=&17 {x}^{3}{z}^{3}+51 {x}^{3}{z}^{2}+51 {x}^{2}{z}^{3}+51
{x}^{3}z+ 141 {x}^{2}{z}^{2}+51 x{z}^{3}+17 {x}^{3}\\&+128
{x}^{2}z+128 x{z}^{
2}+17 {z}^{3}+37 {x}^{2}+100 xz+37 {z}^{2}+21 x+21 z,\\
\end{align*}
and the polynomials $R_k\in\mathbb{Z}[x,y]$ have  degree $k,$ and we
do not explicit them for the sake of shortness. In the computations
of $W_2$ we have used that
\[
\sigma''(x)=-\frac{12(4x^2+4xz+4z^2+6x+6z+3  )}{(2x+4z+3)^3},
\]
expression obtained once more, derivating implicitly
$S(x,\sigma(x))=0.$ Finally,
\begin{align*}
R_{6}(u(s),\sigma(u(s)))&=R_{6}(u(s),v(s))=-\frac{9s^2(s-4)^2S_8(s)}
{8(s^2-2s+4)^6},\\
R_{7}(u(s),\sigma(u(s)))&=R_{7}(u(s),v(s))=\frac{27(s-4)^2S_{12}(s)}
{8(s^2-2s+4)^6},\\
R_{12}(u(s),\sigma(u(s)))&=R_{12}(u(s),v(s))=\frac{729(s-4)^2S_{22}(s)}
{32(s^2-2s+4)^{12}},
\end{align*}
where
\[
S_8(s)=5 {s}^{8}-55 {s}^{7}+197 {s}^{6}-43 {s} ^{5}-1162
{s}^{4}+1100 {s}^{3}+5240 {s}^{2}-10240 s+5120,
\]
and  $S_k\in\mathbb{Z}[s],$ have degree $k,$ and we do not explicit
$S_{12}$ and $S_{22}.$ By using Sturm's approach we prove that none
of them vanish in $(0,1),$ as desired.
\end{proof}

\begin{proof}[Proof of Corollary \ref{corolcotamaxima01}] From the
results of Subsection \ref{ss:whp} the simple zeroes in $(0,1/6)$ of
\begin{align*}\label{eq:final}
I(h)=\int_{\gamma_h} P(x)\,dy=\iint_{\operatorname{Int}(\gamma_h}
P'(x)\,dxdy=-\int_{\gamma_h} P'(x) y\, dx=\sum_{j=0}^{n} \alpha_j
I_{j}(h)
\end{align*}
give rise to limit cycles of \eqref{sistemaqueaplicamosintabel} for
$\varepsilon$ small enough. By using item (i) of Theorem
\ref{th:main2} the result follows.
\end{proof}

\begin{rem}\label{re:final}
It is known that for many Hamiltonian systems, some $k\times k$
Picard-Fuchs system differential equations is satisfied by several
Abelian integrals, see for instance \cite{NovYak2001} and their
references. When one of these integrals is the area function
$A(h)=-\int_{\gamma_h} y\,dx,$
 from its Taylor's expansion at $h=0$ it is not difficult to get
  the Taylor expansion at $h=0$ of all the other Abelian integrals involved in the system.
  Then the approach used to prove item (ii) of Theorem \ref{th:main2}, that recall reduces to compute
  a Wronskian at $h=0,$ can be applied to
study lower bounds of the number of limit cycles bifurcating from
the periodic orbits of the Hamiltonian. Since $A'(h)=T(h),$ the
Taylor's expansion of $A(h)$ near the center can be obtained either
by using Theorem \ref{th:main1} or by using the linear $k$-th order
differential equation satisfied by $A(h)$ obtained from the
Picard-Fuchs system. Notice that to study system \eqref{eq:final} we
have used both approaches.
\end{rem}

\section*{Acknowlegements}

This work has received funding from the Ministerio de Econom\'{\i}a,
Industria y Competitividad - Agencia Estatal de Investigaci\'{o}n
(MTM2016-77278-P FEDER grant), the Ag\`{e}ncia de Gesti\'{o} d'Ajuts
Universitaris i de Recerca (2017 SGR 1617 grant), CAPES grant
88881.068462/2014-01, CNPq grant 304798/2019-3, and São Paulo Paulo Research Foundation (FAPESP) grants 2019/10269-3, 2018/05098-2 and 2016/00242-2.


\begin{thebibliography}{10}

\bibitem{Abh1988}
{\sc Abhyankar, S.~S.}
\newblock What is the difference between a parabola and a hyperbola?
\newblock {\em Math. Intelligencer 10}, 4 (1988), 36--43.

\bibitem{AbhBaj1988}
{\sc Abhyankar, S.~S., and Bajaj, C.~L.}
\newblock Automatic parameterization of rational curves and surfaces. {III}.
  {A}lgebraic plane curves.
\newblock {\em Comput. Aided Geom. Design 5}, 4 (1988), 309--321.

\bibitem{AndroLeontGor1973}
{\sc Andronov, A.~A., Leontovich, E.~A., Gordon, I.~I., and
Ma\u{\i}er, A.~G.}
\newblock {\em Theory of bifurcations of dynamic systems on a plane}.
\newblock Halsted Press [A division of John Wiley \& Sons], New York-Toronto,
  Ont.; Israel Program for Scientific Translations, Jerusalem-London, 1973.
\newblock Translated from the Russian.

\bibitem{ArtLli1994}
{\sc Art\'{e}s, J.~C., and Llibre, J.}
\newblock Quadratic {H}amiltonian vector fields.
\newblock {\em J. Differential Equations 107}, 1 (1994), 80--95.

\bibitem{Bel2011}
{\sc Bel\'{e}ndez, A., Arribas, E., M\'{a}rquez, A., Ortu\~{n}o, M., and
Gallego, S.}
\newblock  Approximate expressions for the period of a simple pendulum using
{T}aylor series expansion. \newblock{\em  European J. Phys. 32}, 5
(2011), 1303--1310.

\bibitem{BeuCus1998}
{\sc Beukers, F., and Cushman, R.}
\newblock Zeeman's monotonicity conjecture.
\newblock {\em J. Differential Equations 143}, 1 (1998), 191--200.

\bibitem{Chi1987}
{\sc Chicone, C.}
\newblock The monotonicity of the period function for planar {H}amiltonian
  vector fields.
\newblock {\em J. Differential Equations 69}, 3 (1987), 310--321.

\bibitem{Chi1988}
{\sc Chicone, C.}
\newblock Geometric methods for two-point nonlinear boundary value problems.
\newblock {\em J. Differential Equations 72}, 2 (1988), 360--407.

\bibitem{ChoHal1982}
{\sc Chow, S.~N., and Hale, J.~K.}
\newblock {\em Methods of bifurcation theory}, vol.~251 of {\em Grundlehren der
  Mathematischen Wissenschaften [Fundamental Principles of Mathematical
  Science]}.
\newblock Springer-Verlag, New York-Berlin, 1982.

\bibitem{ChrLi2007}
{\sc Christopher, C., and Li, C.}
\newblock {\em Limit cycles of differential equations}.
\newblock Advanced Courses in Mathematics. CRM Barcelona. Birkh\"{a}user
  Verlag, Basel, 2007.

\bibitem{CimGasMan2008}
{\sc Cima, A., Gasull, A., and Ma\~{n}osa, V.}
\newblock Studying discrete dynamical systems through differential equations.
\newblock {\em J. Differential Equations 244}, 3 (2008), 630--648.

\bibitem{CimGasMan2015}
{\sc Cima, A., Gasull, A., and Ma\~{n}osa, V.}
\newblock Non-integrability of measure preserving maps via {L}ie symmetries.
\newblock {\em J. Differential Equations 259}, 10 (2015), 5115--5136.

\bibitem{ColGasPro}
{\sc Coll, B., Gasull, A., and Prohens, R.}
\newblock Bifurcation of limit cycles from two families of centers.
\newblock {\em Dyn. Contin. Discrete Impuls. Syst. Ser. A Math. Anal. 12}, 2
  (2005), 275--287.

\bibitem{ConVil2008}
{\sc Constantin, A., and Villari, G.}
\newblock Particle trajectories in linear water waves.
\newblock {\em J. Math. Fluid Mech. 10}, 1 (2008), 1--18.

\bibitem{DumFreLiCheZhaZif1997}
{\sc Dumortier, F., Li, C., and Zhang, Z.}
\newblock Unfolding of a quadratic integrable system with two centers and two
  unbounded heteroclinic loops.
\newblock {\em J. Differential Equations 139}, 1 (1997), 146--193.

\bibitem{Fos2004}
{\sc Foschi, S., Mingari~Scarpello, G., and Ritelli, D.}
\newblock Higher order approximation of the period-energy function for single
  degree of freedom {H}amiltonian systems.
\newblock {\em Meccanica 39}, 4 (2004), 357--368.

\bibitem{FreGasGui2004}
{\sc Freire, E., Gasull, A., and Guillamon, A.}
\newblock First derivative of the period function with applications.
\newblock {\em J. Differential Equations 204}, 1 (2004), 139--162.

\bibitem{GasGuiMan1997}
{\sc Gasull, A., Guillamon, A., and Ma\~{n}osa, V.}
\newblock An explicit expression of the first {L}iapunov and period constants
  with applications.
\newblock {\em J. Math. Anal. Appl. 211}, 1 (1997), 190--212.

\bibitem{GasLazTor2012}
{\sc Gasull, A., L\'{a}zaro, J.~T., and Torregrosa, J.}
\newblock On the {C}hebyshev property for a new family of functions.
\newblock {\em J. Math. Anal. Appl. 387}, 2 (2012), 631--644.

\bibitem{GasLazTor2019}
{\sc Gasull, A., L\'{a}zaro, J.~T., and Torregrosa, J.}
\newblock Rational parameterizations approach for solving equations in some
  dynamical systems problems.
\newblock {\em Qual. Theory Dyn. Syst. 18}, 2 (2019), 583--602.

\bibitem{GasLiWeiLliZha2002}
{\sc Gasull, A., Li, W., Llibre, J., and Zhang, Z.}
\newblock Chebyshev property of complete elliptic integrals and its application
  to {A}belian integrals.
\newblock {\em Pacific J. Math. 202}, 2 (2002), 341--361.

\bibitem{GasManVil}
{\sc Gasull, A., Ma\~{n}osa, V., and Villadelprat, J.}
\newblock On the period of the limit cycles appearing in one-parameter
  bifurcations.
\newblock {\em J. Differential Equations 213}, 2 (2005), 255--288.

\bibitem{GauGavIli2009}
{\sc Gautier, S., Gavrilov, L., and Iliev, I.~D.}
\newblock Perturbations of quadratic centers of genus one.
\newblock {\em Discrete Contin. Dyn. Syst. 25}, 2 (2009), 511--535.

\bibitem{Lubomir2001}
{\sc Gavrilov, L.}
\newblock The infinitesimal 16th {H}ilbert problem in the quadratic case.
\newblock {\em Invent. Math. 143}, 3 (2001), 449--497.

\bibitem{LubomirIliya2002}
{\sc Gavrilov, L., and Iliev, I.~D.}
\newblock Bifurcations of limit cycles from infinity in quadratic systems.
\newblock {\em Canad. J. Math. 54}, 5 (2002), 1038--1064.

\bibitem{GraManVil2011}
{\sc Grau, M., Ma\~{n}osas, F., and Villadelprat, J.}
\newblock A {C}hebyshev criterion for abelian integrals.
\newblock {\em Trans. Amer. Math. Soc. 363}, 1 (2011), 109--129.

\bibitem{HorIli1994}
{\sc Horozov, E., and Iliev, I.~D.}
\newblock On the number of limit cycles in perturbations of quadratic
  {H}amiltonian systems.
\newblock {\em Proc. London Math. Soc. (3) 69}, 1 (1994), 198--224.

\bibitem{Ily2002}
{\sc Ilyashenko, Y.}
\newblock Centennial history of {H}ilbert's 16th problem.
\newblock {\em Bull. Amer. Math. Soc. (N.S.) 39}, 3 (2002), 301--354.

\bibitem{IlyYak2008}
{\sc Ilyashenko, Y., and Yakovenko, S.}
\newblock {\em Lectures on analytic differential equations}, vol.~86 of {\em
  Graduate Studies in Mathematics}.
\newblock American Mathematical Society, Providence, RI, 2008.

\bibitem{KarStu1966}
{\sc Karlin, S., and Studden, W.~J.}
\newblock {\em Tchebycheff systems: {W}ith applications in analysis and
  statistics}.
\newblock Pure and Applied Mathematics, Vol. XV. Interscience Publishers John
  Wiley \& Sons, New York-London-Sydney, 1966.

\bibitem{LiZhang1996}
{\sc Li, C., and Zhang, Z.-F.}
\newblock A criterion for determining the monotonicity of the ratio of two
  abelian integrals.
\newblock {\em journal of differential equations 124}, 2 (1996), 407--424.

\bibitem{Lic1999}
{\sc Lichardov\'{a}, H.}
\newblock Limit cycles in the equation of whirling pendulum with autonomous
  perturbation.
\newblock {\em Appl. Math. 44}, 4 (1999), 271--288.

\bibitem{ManVil2002}
{\sc Ma\~{n}osas, F., and Villadelprat, J.}
\newblock Area-preserving normalizations for centers of planar {H}amiltonian
  systems.
\newblock {\em J. Differential Equations 179}, 2 (2002), 625--646.

\bibitem{Mardesic1998}
{\sc Mard\v{e}si\'{c}, P.}
\newblock {\em Chebyshev systems and the versal unfolding of the cusps of order
  {$n$}}, vol.~57 of {\em Travaux en Cours [Works in Progress]}.
\newblock Hermann, Paris, 1998.

\bibitem{NovYak2001}
{\sc Novikov, D., and Yakovenko, S.}
\newblock Redundant {P}icard-{F}uchs system for abelian integrals.
\newblock {\em J. Differential Equations 177}, 2 (2001), 267--306.

\bibitem{Peng2002}
{\sc Peng, L.~P.}
\newblock Unfolding of a quadratic integrable system with a homoclinic loop.
\newblock {\em Acta Math. Sin. (Engl. Ser.) 18}, 4 (2002), 737--754.

\bibitem{Petrov1988}
{\sc Petrov, G.~S.}
\newblock The {C}hebyshev property of elliptic integrals.
\newblock {\em Funktsional. Anal. i Prilozhen. 22}, 1 (1988), 83--84.

\bibitem{PloSwi2018}
{\sc P\l ociniczak, \L., and \'{S}wita\l a, M.}
\newblock Monotonicity, oscillations and stability of a solution to a nonlinear
  equation modelling the capillary rise.
\newblock {\em Phys. D 362\/} (2018), 1--8.

\bibitem{Rot1985}
{\sc Rothe, F.}
\newblock The periods of the {V}olterra-{L}otka system.
\newblock {\em J. Reine Angew. Math. 355\/} (1985), 129--138.

\bibitem{Sen2008}
{\sc Sendra, J.~R., Winkler, F., and P\'{e}rez-D\'{\i}az, S.}
\newblock {\em Rational algebraic curves}, vol.~22 of {\em Algorithms and
  Computation in Mathematics}.
\newblock Springer, Berlin, 2008.
\newblock A computer algebra approach.

\bibitem{Sma1998}
{\sc Smale, S.}
\newblock Mathematical problems for the next century.
\newblock {\em Math. Intelligencer 20}, 2 (1998), 7--15.

\bibitem{Sto2002}
{\sc Stoer, J., and Bulirsch, R.}
\newblock {\em Introduction to numerical analysis}, third~ed., vol.~12 of {\em
  Texts in Applied Mathematics}.
\newblock Springer-Verlag, New York, 2002.
\newblock Translated from the German by R. Bartels, W. Gautschi and C.
  Witzgall.

\bibitem{Stu2002}
{\sc Sturmfels, B.}
\newblock {\em Solving systems of polynomial equations}, vol.~97 of {\em CBMS
  Regional Conference Series in Mathematics}.
\newblock Published for the Conference Board of the Mathematical Sciences,
  Washington, DC; by the American Mathematical Society, Providence, RI, 2002.

\bibitem{UrbHoss2017}
{\sc Uribe, M., and Movasati, H.}
\newblock {\em Limit cycles, {A}belian integral and {H}ilbert's sixteenth
  problem}.
\newblock Publica\c{c}\~{o}es Matem\'{a}ticas do IMPA. [IMPA Mathematical
  Publications]. Instituto Nacional de Matem\'{a}tica Pura e Aplicada (IMPA),
  Rio de Janeiro, 2017.
\newblock 31${^{{}}{\rm{o}}}$ Col\'{o}quio Brasileiro de Matem\'{a}tica.

\bibitem{Wal1986}
{\sc Waldvogel, J.}
\newblock The period in the {L}otka-{V}olterra system is monotonic.
\newblock {\em J. Math. Anal. Appl. 114}, 1 (1986), 178--184.

\end{thebibliography}

\end{document}